\theoremstyle{plain}
\newtheorem{theorem}[equation]{Theorem}
\newtheorem{corollary}[equation]{Corollary}
\newtheorem{lemma}[equation]{Lemma}
\newtheorem{proposition}[equation]{Proposition}
\theoremstyle{definition}
\newtheorem{definition}[equation]{Definition}
\newtheorem{remark}[equation]{Remark}
\newtheorem{example}[equation]{Example}
\newcommand{\IC}{\mathbb{C}}
\newcommand{\IP}{\mathbb{P}}
\newcommand{\IR}{\mathbb{R}}
\newcommand{\IZ}{\mathbb{Z}}
\renewcommand{\deg}{\mathrm{deg}}
\renewcommand\Re{\mathrm Re}
\def\d/{/\mspace{-6.0mu}/}
\let\oldphi\phi \let\phi\varphi \let\varphi\oldphi
\begin{document}
\title{On $\mathbb{Z}_2$ harmonic functions on $\IR^2$ and the polynomial Pell equation}

\date{}
\author{Weifeng Sun}
\maketitle

There have been many studies on ``$\mathbb{Z}_2$" harmonic functions, differential forms, or spinors recently, for example \cite{Taubes2021ExamplesThree}\cite{CliffordTopologicalS2}\cite{Donaldson2021DeformationsFunctions}\cite{SiqiHe2022ExistenceSymmetry}. It is then natural to ask what the most simple case is like: $\mathbb{Z}_2$ harmonic functions on $\mathbb{R}^2$ with point singularities? This paper studies such $\mathbb{Z}_2$ harmonic functions and finds its unexpected relationship with the polynomial Pell equation.\\

\textbf{Remark:} Unlike the more ambitious ongoing program  of studying $\mathbb{Z}_2$ harmonic functions $f$ such that $|df| = 0 $ on the singular locus $K$ (see, for example, \cite{Donaldson2021DeformationsFunctions}\cite{SiqiHe2022ExistenceSymmetry}), this paper only requires $|df|$ to have locally finite $L^2$ norms near $K$, which means, $|df|$ is allowed to be unbounded near $K$. The readers should not confuse what we study here with the other ongoing program.\\

\section*{Summary of the Results}

Let $X$ be $\IR^2 \backslash K$, where $K$ is a collection of an even number of isolated points.  We will also identify $\IR^2$ with $\IC$ and suppose $K = \{z_1, z_2, \cdots, z_{2k}\} \subset \IC$. Then there is a unique real line bundle $\mathcal{L}$ in $X$ that has monodromy $-1$ around each point in $K$. An $\mathcal{L}$-valued function/differential form is also called a $\mathbb{Z}_2$ function/differential form.\\

Locally, a $\mathbb{Z}_2$ function $f$ can be regarded as an ordinary function under a trivialization of $\mathcal{L}$. This function changes sign in a different trivialization, which is why it is called a ``$\mathbb{Z}_2$" function. In particular, the derivatives of $f$ make sense if $f$ is differentiable (which are also $\mathbb{Z}_2$ functions).

\begin{definition}
A $\mathbb{Z}_2$ harmonic function $f$ is a $\mathbb{Z}_2$ function whose Laplacian is $0$. That is to say, under any local trivialization,
$$\Delta f = (\partial_x^2 + \partial_y^2) f = 0.$$

In addition, in this paper, we assume that $|\nabla f|$ has a bounded $L^2$ norm on any bounded open subset of $X$ (including an open neighborhood of $K$).
\end{definition}

Choose a large disk $B \subset \mathbb{C}$ that contains $K$ as a subset. Then the line bundle $\mathcal{L}$ is trivial on $\mathbb{C} \backslash B$. For convenience, we fix a trivialization $\iota$ of $\mathcal{L}$ in $\mathbb{C} \backslash B$. Under this trivialization, a $\mathbb{Z}_2$ function $f$ is an ordinary real-valued function in $\mathbb{C} \backslash B$.\\

Here are the main results of this paper (proofs will be deferred to later sections).

\begin{theorem}\label{theorem of two dimensional correspondence}
    Suppose $$D(z) = \prod\limits_{i=1}^{2k}(z - z_i).$$
Then there is a natural 1-1 correspondence between any two of the following three objects:

\begin{itemize}
    \item A $\mathbb{Z}_2$ harmonic function $f$ on $X$.
    \item An admissible entire function $u(z)$ on $\mathbb{C}$, where \textbf{admissible} means for any $j = 2, 3, \cdots, 2k$, 
$$\text{Re} (\int_{z_1}^{z_j} \dfrac{ u(z)}{\sqrt{D(z)}} dz) = 0, $$
and $\text{Re}$ means the real part.
   \item An equivalence class $[h]$ of harmonic functions in $\mathbb{C} \backslash \{O\}$, where $O$ is the origin, given by:
    $$h_1 \sim h_2 ~~~~ \text{if and only if} ~~~ h_1 - h_2 \rightarrow C ~~ \text{when} ~~ |z| \rightarrow +\infty, $$
    where $C$ is a constant.
\end{itemize}

To be more precise, their relationships are given by
$$f(z) = \text{Re} (\int_{z_1}^z \dfrac{u(z)}{\sqrt{D(z)}} dz) ~~\text{and} ~~ \lim\limits_{|z| \rightarrow +\infty} (f(z) - h(z)) = C ~~~ \text{for some constant $C$.} $$
\end{theorem}

\begin{remark}
Strictly speaking, the integral $\displaystyle \int_{z_1}^{z_j} \dfrac{u(z)}{\sqrt{D(z)}} dz$ may depend on a path chosen from $z_1$ to $z_j$ in $K$ and a sign chosen for $\sqrt{D(z)}$. But for any paths chosen from $z_1$ to each $z_j$ and any sign chosen for $\sqrt{D(z)}$ the admissible conditions are equivalent.
Moreover, the admissible condition is equivalent to saying that the integral $ \displaystyle \text{Re} (\int_{z_1}^z \dfrac{u(z)}{\sqrt{D(z)}} dz)$ is well defined up to a sign (or, to be more precise, as a section of the line bundle $\mathcal{L}$). Details can be found in section \ref{Section: Properties} remark \ref{Remark well define admissible}.\\
\end{remark}

There is an analog correspondence in the higher-dimensional case but even simpler: We assume that the ambient space is $X = \mathbb{R}^n \backslash K$, where $n \geq 3$ and $K$ is a compactly smoothly embedded sub-manifold of dimension $n-2$ in $\mathbb{R}^n$. Let $\mathcal{L}$ be the unique real line bundle on $X$ with monodromy $-1$ around $K$. We can define $\mathbb{Z}_2$ harmonic functions $f$ on $X$ with monodromy $-1$ around $K$ in the same way. Then
\begin{theorem}\label{theorem of higher dimensional analog}
There is a bijection between:

\begin{itemize}
    \item $\mathbb{Z}_2$ harmonic functions $f$ on $\IR^n \backslash K$.
    \item Harmonic functions $h$ on $\IR^n$.
    \end{itemize}
The bijection is given by $$f - h \rightarrow 0 ~~~~~~ \text{when}~~~~ |\mathbf{x}| \rightarrow +\infty, $$
where $\mathbf{X}$ is the coordinates of $\mathbb{R}^n$.
\end{theorem}

\begin{remark}
    Soon after the author wrote the first draft of this paper, professor Mazzeo told the author a shorter argument to prove the theorem \ref{theorem of higher dimensional analog}, which indicates that this result might not be unexpected for experts. (Note that the $n = 2$ case is different.) But the author chose not to change the original argument here for two reasons: 1. Mazzeo's argument relies on more background knowledge on geometrical analysis, while the current one, though more tedious in details, is more elementary in a certain sense. 2. The author believes that the original argument could also potentially be illuminating in other studies (for example, the study of more complicated $\mathbb{Z}_2$ harmonic objects).
\end{remark}

Nevertheless, the following corollary of theorem \ref{theorem of higher dimensional analog} is new and partially answers a question asked by Mazzeo which roughly says:\\

     \textit{Suppose that there is a unit circle $C$ in the Euclidean space $\mathbb{R}^3$, is there a $\mathbb{Z}_2$ harmonic function $f$ on $\mathbb{R}^3 \backslash C$ with monodromy $-1$ around $C$ such that $|f|$ extends to a continuous function throughout $\mathbb{R}^3$? Will it be useful in gluing arguments in gauge theory?}\\

\begin{corollary}\label{Corollary of higher dimensional case}
Suppose $C$ is a unit circle embedded in the Euclidean space $\mathbb{R}^3$. Let $\rho$ be the distance to the circle $C$. Then for any positive integer $k$, there is a $\mathbb{Z}_2$ harmonic function $f$ on $\mathbb{R}^3 \backslash C$ with monodromy $-1$ around $C$ such that $|f| = O(\rho^k)$ as $\rho \rightarrow 0$.    
\end{corollary}

The proof is given in section \ref{Section: Properties}.\\

After the digression on the higher-dimensional case, we come back to the two-dimensional case. Suppose $K$ and $D(z)$ are given. Consider the following harmonic function in $\mathbb{C} \backslash \{O\}$: $$h(z) = \ln|z|.$$ 
By theorem \ref{theorem of two dimensional correspondence}, there is a unique $\mathbb{Z}_2$ harmonic function $f(z)$ (denoted $G_D(z)$, which depends on $D(z)$) and a unique admissible entire function $U_D(z)$ that corresponds to it. 
    Then they satisfy the following properties:
\begin{theorem}\label{theorem of Pell} The triple $(G_D(z), U_D(z), \ln |z|)$ is ``minimal" in the following sense:
\begin{itemize}
    \item Any $\mathbb{Z}_2$ harmonic function $f$ such that $f = o(|z|)$ as $|z| \rightarrow +\infty$ is a scalar multiplication of $G_D(z)$ by a real-valued constant.
    \item The zero locus of $G_D(z)$: $Z(G_D(z)) := \{z \in \mathbb{C}~|~ G_D(z) = 0\}$ is compact. Moreover, any $\mathbb{Z}_2$ harmonic function that satisfies this property is a multiplication of $G_D(z)$ by a real-valued constant.
    \item Any admissible entire function $u(z)$ in $\mathbb{C}$ such that $u(z) = o(|z|^k)$ as $|z| \rightarrow +\infty$ is a scalar multiplication of $U_D(z)$ by a real-valued constant.
\end{itemize}
\end{theorem}
Moreover, we have the following mysterious relationship between $U_D(z)$ and the polynomial Pell equation:
\begin{theorem} \label{theorem of Pell property}  If $(p(z), q(z))$ is a pair of polynomials that satisfies the following polynomial Pell equation:
    $$p(z)^2 - D(z)q(z)^2 = 1, $$
    then $\dfrac{p'(z)}{q(z)}$ is a multiplication of $U_D(z)$ up to a multiplication by a real valued constant.\\
    
    Moreover, a pair $(p(z), q(z))$ satisfies the polynomial Pell equation if any only if the following algebraic expression of the Abel integration holds:
    $$\int \dfrac{U_D(z)}{\sqrt{
    D(z)}} dz = C \ln(p(z) + q(z)\sqrt{D(z)}), ~~~ \text{where $C$ is a real-valued constant}.  $$
\end{theorem}

\begin{remark}~
    \begin{itemize}
       \item Not all $D(z)$ whose corresponding polynomial Pell equation has a solution. If it has a solution, then $D(z)$ is called Pellian.
       \item We may easily extend the definition of $U_D(z)$ to the case where $D(z)$ has roots of multiplicity greater than $1$. See definition \ref{Definition of U}.
        \item When $D(z)$ is Pellian, the existence of such a polynomial $U_D(z)$ that satisfies the properties in theorem \ref{theorem of Pell property} is known. (See \cite{Dubickas2004TheEquation} and \cite{Nathanson1976PolynomialEquations} for example.) However, to the author's knowledge, its natural relationship with $\mathbb{Z}_2$ harmonic functions is a new result. 
        \item Our definition of $U_D(z)$ (through the correspondence given by theorem \ref{theorem of two dimensional correspondence}), which still makes sense even when $D(z)$ is non-Pellian, to the author's knowledge, is also new.
    \end{itemize}
\end{remark} 

    With the help of our generalized definition of $U_D(z)$ (which makes sense even if $P(z)$ is non-Pellian), there is a natural practical method to check whether any given $D(z)$ is Pellian or not. (Roughly speaking, for any given $D(z)$, Pellian or not, one may find $U_D(z)$ first and then check whether the Abel integration is well-defined or not.)\\
    
    Details of this method are described in section \ref{Section: Pell}. The author does not know whether this is the only known method or not. (The author suspects that it is not, since polynomial Pell equations have been a well-studied subject for a long time, and the author is not an expert in that.) But the approach through $\mathbb{Z}_2$ harmonic functions is new.\\

There are also some miscellaneous studies on the zero locus of $\mathbb{Z}_2$ harmonic functions (and their relationship with $u(z)$) which are stated in section \ref{Section: zero locus} but omitted in the introduction.\\

\textbf{Acknowledgment:} I want to thank my Ph.D. advisor, Prof. Taubes, for his endless support, inspiration, and encouragement. Several arguments used in the paper are due to Taubes (which will be mentioned precisely), and more ideas were inspired by him. This paper cannot be finished without his help. I also want to thank Zhenkun Li, who helped me calculate the first homology group of the double branched covering over $\IC$ and Prof. Mazzeo, who encourages and supports me all the time. And I want to thank Boyu Zhang, Kai Xu, Ziquan Yang, Yongquan Zhang, Siqi He for helpful discussions with them.

\section{Properties of $\mathbb{Z}_2$ harmonic functions}\label{Section: Properties}

This section mainly proves theorem \ref{theorem of two dimensional correspondence}, theorem \ref{theorem of higher dimensional analog}, and corollary \ref{Corollary of higher dimensional case} in the introduction. 

\begin{lemma}\label{local neibourhood lemma}
If $f$ is a $\mathbb{Z}_2$ harmonic function on $X$, then for each $z_i \in K$, there exists an open neighborhood $U_i$ of $z_i$, and a $\mathbb{Z}_2$ holomorphic function $\phi_i(z)$ on $U_i \backslash \{z_i\}$, such that
$$f = \text{Re}(\phi_i(z)), ~~~~ \text{for}~~z \in U_i\backslash\{z_i\} $$
where Re is the real part and
$$\phi_i(z) = a_i(z - z_i)^{k_i + \frac{1}{2}} + O(|z-z_i|^{k_i + \frac{3}{2}}).$$

We call the half integer $k_i + \dfrac{1}{2}$ the degree of $f$ at $z = z_i$, or $\deg_{z_i} f(z)$.
\end{lemma}

\begin{proof}
Suppose $z - z_i = \rho e^{i\theta}$, where $(\rho, \theta)$ is the polar coordinates.
Take the Fourier series of $f$ with respect to $\theta$ in a small neighborhood of $z_i$. Locally $f$ can be written as
$$f = \text{Re}(\sum\limits_{m=0}^{+\infty} a_m(\rho)e^{\frac{(2m+1)i\theta}{2}}),$$
where $a_m(\rho)$  are complex-valued functions in $\rho$.\\

(The Fourier series exists because $f$ is an ordinary smooth function on the double cover of a neighborhood of $P$ with the exclusion of $P$ itself, and one can do the Fourier series there. The integer terms disappear because $f$ has monodromy $-1$.)\\

Based on the Laplacian equation,
$$\dfrac{d^2a_m(\rho)}{d\rho^2} + \dfrac{1}{\rho} \dfrac{da_m(\rho)}{d\rho} - \dfrac{(2m+1)^2}{4\rho^2}a_m(\rho) = 0.$$

So $a_m(\rho) = A_m \rho^{\frac{2m+1}{2}} + \tilde{A}_m \rho^{-\frac{2m+1}{2}}$, where $A_m$ and $\tilde{A}_m$ are complex-valued constants. Since $|\nabla f|$ has locally finite $L^2$ norm, $\tilde{A}_m$ must be $0$. So

$$f = \text{Re}( \sum\limits_{m=0}^{+\infty} A_m\rho^{\frac{2m+1}{2}}e^{\frac{(2m+1)i\theta}{2}})= \text{Re} (\sum\limits_{n=0}^{+\infty}A_m(z-z_i)^{\frac{2m+1}{2}}).$$

Let $\phi_i(z) =\sum\limits_{n=0}^{+\infty}A_m(z-z_i)^{\frac{2m+1}{2}} $ and let $m = k_i$ be the smallest integer such that $A_m \neq 0.$ Then the proof is finished.
\end{proof}

Here are two immediate corollaries.

\begin{corollary}\label{corollary of |f| = 0 on K}
If $f$ is a $\mathbb{Z}_2$ harmonic function, then $|f|$ can be extended to $\IR^2$ continuously by setting $|f| = 0$ on $K$.
\end{corollary}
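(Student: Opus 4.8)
The plan is to combine the local structure result just proved (Theorem~\ref{local neibourhood theorem}) with the elementary observation that, although a $Z2$ function is only defined up to sign under a change of local trivialization of $\mathcal{L}$, its absolute value is not. So first I would note that $|f|$ descends to an honest real-valued function on $X = \IR^2 \backslash K$: if $f$ is represented locally by an ordinary function $g$ in one trivialization and by $-g$ in another, then $|g|$ is the same in both, so $|f|$ is globally well defined on $X$. Since $f$ is smooth on $X$ (being locally harmonic), $|f|$ is continuous there, so the only issue is continuity at the punctures.

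Next I would check continuity at each $z_i \in K$. By Theorem~\ref{local neibourhood theorem} there is a punctured neighbourhood $U_i \backslash \{z_i\}$ on which $f = \text{Re}(\phi_i(z))$ with
$$\phi_i(z) = a_i (z - z_i)^{k_i + \frac12} + O(|z - z_i|^{k_i + \frac32}), \qquad k_i \ge 0,$$
where $k_i \ge 0$ because the index $m$ in that theorem runs over the nonnegative integers. Hence, shrinking $U_i$ if necessary, there is a constant $C$ with
$$|f(z)| = |\text{Re}(\phi_i(z))| \le |\phi_i(z)| \le |a_i|\,|z - z_i|^{k_i + \frac12} + C\,|z - z_i|^{k_i + \frac32}$$
for all $z \in U_i \backslash \{z_i\}$. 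Since $k_i + \frac12 \ge \frac12 > 0$, the right-hand side tends to $0$ as $z \to z_i$, so assigning the value $0$ to $|f|$ at $z_i$ makes $|f|$ continuous at $z_i$. Performing this at every point of the finite set $K$ yields a continuous extension of $|f|$ to all of $\IR^2$.

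There is essentially no serious obstacle here; the two small points requiring care are the well-definedness of $|f|$ as a genuine (non-$Z2$) function, and the uniformity of the $O$-term in Theorem~\ref{local neibourhood theorem} on a small enough punctured disc about $z_i$ so that the displayed bound is legitimate — both are immediate from the setup. I might also remark in passing that the same estimate shows $|f|$ is in fact Hölder continuous at $K$ (with exponent $\tfrac12$ in the extreme case $k_i = 0$), although only plain continuity is asserted in the statement.
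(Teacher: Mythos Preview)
Your argument is correct and is exactly the intended one: the paper states this as an immediate corollary of Theorem~\ref{local neibourhood theorem} without further proof, and what you wrote is precisely the obvious verification that $|f(z)|\le|\phi_i(z)|\to 0$ as $z\to z_i$.
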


\begin{corollary}\label{local neibourhood corollary}
The expression $\dfrac{\phi_i(z)}{(z - z_i)^{k_i + \frac{1}{2}}}$ represents a holomorphic function in $U_i$ up to a sign. Suppose $$D(z) = \prod\limits_{i=1}^{2k}(z - z_i),$$
then $\dfrac{\phi_i(z)}{\sqrt{D(z)}}$ represents a holomorphic function in $U_i$ up to a sign. When $k_i \geq 1$,  $\dfrac{\phi_i(z)}{\sqrt{D(z)}}$ has a zero point of degree $k_i$ at $z = z_i$.
\end{corollary}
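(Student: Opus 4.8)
The plan is to obtain everything directly from the local expansion of $\phi_i$ established in Theorem \ref{local neibourhood theorem}, and then simply divide. First I would fix, on $U_i$ with a ray from $z_i$ removed (a simply connected set), a trivialization of $\mathcal{L}$, so that the $Z2$ holomorphic function $\phi_i$ becomes an honest holomorphic function there with the convergent expansion
$$\phi_i(z) = \sum_{m \ge k_i} A_m (z-z_i)^{m + \frac12}, \qquad A_{k_i} = a_i \neq 0 ,$$
and at the same time choose the branch of $(z-z_i)^{1/2}$ determined by this trivialization (equivalently, the local nonvanishing section of $\mathcal{L}$ on the punctured disk). Dividing the series termwise,
$$\frac{\phi_i(z)}{(z-z_i)^{k_i+\frac12}} \;=\; \sum_{m\ge k_i} A_m (z-z_i)^{m-k_i} \;=\; \sum_{j\ge 0} A_{j+k_i}(z-z_i)^{j},$$
which has the same radius of convergence as the series for $\phi_i$ and no negative powers, hence extends to an honest holomorphic function on all of $U_i$. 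The only datum chosen was the branch of $(z-z_i)^{1/2}$, and the two branches differ by an overall sign; this is what "up to a sign" means, and it proves the first assertion.

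For the second assertion, after possibly shrinking $U_i$, write $D(z) = (z-z_i)\,E_i(z)$ with $E_i(z) = \prod_{j\neq i}(z-z_j)$. Since $E_i$ is holomorphic and nowhere zero on $U_i$, it has there a holomorphic square root $\sqrt{E_i(z)}$, unique up to sign, with $\sqrt{E_i(z_i)}\neq 0$. Then $\sqrt{D(z)} = (z-z_i)^{1/2}\sqrt{E_i(z)}$ with the branches above, so
$$\frac{\phi_i(z)}{\sqrt{D(z)}} \;=\; \frac{1}{\sqrt{E_i(z)}}\cdot\frac{\phi_i(z)}{(z-z_i)^{1/2}} \;=\; \frac{1}{\sqrt{E_i(z)}}\sum_{m\ge k_i} A_m (z-z_i)^{m},$$
which is again holomorphic on $U_i$, being the product of the holomorphic function just constructed with the holomorphic nonvanishing function $1/\sqrt{E_i(z)}$; it is well defined up to the sign of $\sqrt{E_i}$ (the branch of $(z-z_i)^{1/2}$ cancels against the corresponding sign in the $Z2$ object $\phi_i$, since $\phi_i$ and $(z-z_i)^{1/2}$ are sections of the same line bundle $\mathcal{L}$). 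For the last assertion, when $k_i\ge 1$ this function vanishes at $z_i$ and its lowest-order term is $\dfrac{A_{k_i}}{\sqrt{E_i(z_i)}}(z-z_i)^{k_i}$ with $A_{k_i}\neq 0$ and $\sqrt{E_i(z_i)}\neq 0$, so it has a zero of order exactly $k_i$ there.

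This is essentially bookkeeping once Theorem \ref{local neibourhood theorem} is available, so I do not expect a genuine obstacle; the only two points deserving a word of care are (i) that termwise division of the convergent series for $\phi_i$ by the fixed power $(z-z_i)^{k_i+1/2}$ again yields a convergent (not merely formal) power series, which is immediate since this does not change the radius of convergence, and (ii) the precise meaning of "represents a holomorphic function up to a sign," which reduces to the branch ambiguities of $(z-z_i)^{1/2}$ and of $\sqrt{E_i(z)}$ together with the fact that the quotient of two sections of $\mathcal{L}$ is an honest function (a section of $\mathcal{L}\otimes\mathcal{L}\cong\shO$).
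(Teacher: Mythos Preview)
Your proof is correct and is exactly the elaboration the paper has in mind: the corollary is stated there as an ``immediate corollary'' of Theorem \ref{local neibourhood theorem} with no separate argument, and your termwise division of the convergent series for $\phi_i$, together with the factorization $\sqrt{D(z)} = (z-z_i)^{1/2}\sqrt{E_i(z)}$ with $E_i$ nonvanishing on $U_i$, is precisely the computation that makes it immediate. Your discussion of the sign bookkeeping (the branch of $(z-z_i)^{1/2}$ tied to the trivialization of $\mathcal L$, and the independent sign of $\sqrt{E_i}$) is a welcome clarification of what ``up to a sign'' means here.
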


Note that in general, $\dfrac{\phi_i(z)}{\sqrt{D(z)}}$ is not a global holomorphic function.

\begin{theorem}\label{entire function}
There exists an entire function $u(z)$ on $\IC$ such that
$$df = \text{Re} (\dfrac{u(z)dz}{\sqrt{D(z)}}). $$
Moreover, if $k_i \geq 1$, then $u(z)$ has a zero point at $z = z_i$ of degree $k_i$.
\end{theorem}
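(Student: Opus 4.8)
The plan is to produce $u$ by assembling local holomorphic data on $X$ and then checking that the candidate singularities at the points of $K$ are removable. Since $\Delta f = 0$, the function $\psi := 2\,\partial_z f$ satisfies $\partial_{\bar z}\psi = \tfrac{1}{2}\Delta f = 0$, so $\psi$ is holomorphic; and because $f$ is real under any local trivialization, $df = \partial_z f\,dz + \overline{\partial_z f}\,d\bar z = \mathrm{Re}(\psi\,dz)$. As a derivative of a $Z2$ function, $\psi$ is $\mathcal{L}$-valued, i.e.\ a $Z2$ holomorphic function on $X$ (near $z_i$ it is just $\phi_i'$, the derivative of the local potential of Theorem \ref{local neibourhood theorem}). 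On the other hand, since $z_1,\dots,z_{2n}$ are distinct, $D$ has a simple zero at each of them, so $\sqrt{D(z)}$ is also an $\mathcal{L}$-valued holomorphic function on $X$. I would then set
$$u := \psi\cdot\sqrt{D(z)}.$$
Being a product of two $\mathcal{L}$-valued objects, $u$ is valued in $\mathcal{L}\otimes\mathcal{L}$, which is trivial, so $u$ is an honest single-valued holomorphic function on $X = \IR^2\backslash K$; concretely, $\pi_1(X)$ is generated by the small loops around the $z_i$, each of which multiplies both $\psi$ and $\sqrt{D}$ by $-1$, so the monodromy of $u$ around every generator is $+1$. By construction $df = \mathrm{Re}(\psi\,dz) = \mathrm{Re}\!\left(u\,dz/\sqrt{D(z)}\right)$, so it remains only to show that $u$ extends holomorphically across each $z_i$ with a zero of order $k_i$.

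For this I would use the local normal form of Theorem \ref{local neibourhood theorem} (equivalently, Corollary \ref{local neibourhood corollary}). Near $z_i$, write $\phi_i(z) = (z-z_i)^{1/2}\,\Phi_i(z)$, where $\Phi_i(z) = \sum_{m\ge k_i}A_m(z-z_i)^m$ is holomorphic with a zero of order exactly $k_i$. Differentiating gives $\phi_i'(z) = (z-z_i)^{-1/2}\,G_i(z)$ with $G_i := \tfrac{1}{2}\Phi_i + (z-z_i)\Phi_i'$ holomorphic, and a one-line leading-coefficient computation shows $G_i$ vanishes to order exactly $k_i$ (its leading coefficient is $(k_i+\tfrac{1}{2})A_{k_i}\ne 0$). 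Likewise $\sqrt{D(z)} = (z-z_i)^{1/2}\,H_i(z)$ with $H_i$ holomorphic and nonvanishing near $z_i$. Since $\psi$ agrees with $\phi_i'$ up to sign and the sign ambiguities in $\psi$ and $\sqrt{D}$ cancel, near $z_i$ we have $u = G_iH_i$, which is holomorphic with a zero of order exactly $k_i$. Hence $u$ extends over every point of $K$, is entire, and vanishes to order $k_i$ at $z_i$; in particular it has a genuine zero there precisely when $k_i\ge 1$.

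I do not expect a serious obstacle here: the one delicate point is conceptual rather than computational. Individually $\psi$ and $\sqrt{D}$ are honestly multivalued on $X$, so the expression $u\,dz/\sqrt{D}$ is meaningful only because the two monodromies cancel, and one must check this around each generator of $\pi_1(X)$ and, symmetrically, that the local representatives $G_iH_i$ glue into the single global function $u$. Once that bookkeeping is in place, the order count — the factors $(z-z_i)^{-1/2}$ and $(z-z_i)^{1/2}$ cancel, leaving the holomorphic factors to contribute $k_i$ — is routine. No estimate at infinity is needed, since "entire" imposes no growth condition there.
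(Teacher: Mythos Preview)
Your proposal is correct and follows essentially the same route as the paper: define $\psi = 2\partial_z f = f_x - if_y$, observe it is $Z2$-holomorphic, set $u = \sqrt{D}\,\psi$ so that the monodromies cancel, and then use the local expansion from Theorem~\ref{local neibourhood theorem} to remove the apparent singularities at $K$. The only cosmetic difference is in the removability step: the paper rewrites $u$ via the product rule as $(\phi_i/\sqrt{D})'D + \tfrac{1}{2}(\phi_i/\sqrt{D})D'$ and appeals to Corollary~\ref{local neibourhood corollary}, whereas you factor $\phi_i' = (z-z_i)^{-1/2}G_i$ and $\sqrt{D} = (z-z_i)^{1/2}H_i$ directly; your version has the small advantage of making the exact order $k_i$ (with leading coefficient $(k_i+\tfrac{1}{2})A_{k_i}\neq 0$) explicit.
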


\begin{proof}
Both $df$ and $*df$ are $\mathbb{Z}_2$ 1-forms on $\IR^2 \backslash K$. And 
$$df + i*df = (f_x dx + f_y dy + if_x dy - i f_y dx)  $$
$$= (f_x - if_y)(dx + idy) = (f_x - if_y)dz. $$
Note that 
$$ \bar{\partial}(f_x - if_y) = \dfrac{1}{2}(\partial_x + i\partial_y)(f_x - if_y) = \dfrac{1}{2}(f_{xx} + f_{yy}) = 0. $$

So $f_x - i f_y$ is a $\mathbb{Z}_2$ holomorphic function on $\IR^2 \backslash K$ with monodromy $-1$ at $K$.\\

Let $u(z) = \sqrt{D(z)} (f_x - if_y)$. Then $u(z)$ is an ordinary holomorphic function in $\IR^2 \backslash K$. (Strictly speaking, $u(z)$ is determined only up to a sign here. But if we fix a sign convention for $\sqrt{D(z)}$ as a section of $\mathcal{L}$, then $u(z)$ is unique. We will always assume that this is the case.)\\

Let $\phi_i(z)$ be the function defined in lemma \ref{local neibourhood lemma} and corollary \ref{local neibourhood corollary}, then $\dfrac{\phi_i(z)}{\sqrt{D(z)}}$ is holomorphic in a neighborhood of $z_i$ and 
$$f_x - if_y = \phi_i'(z) ~~~~ \text{in a neighborhood of $z_i$}. $$

Thus $u(z) = \sqrt{D(z)} \phi_i'(z) =  (\dfrac{\phi_i(z)}{\sqrt{D(z)}})' D(z) + \dfrac{1}{2} (\dfrac{\phi_i(z)}{\sqrt{D(z)}}) D'(z)$ is also holomorphic in a neighborhood of $z_i$. And $u(z)$ is an entire function in $\IC$.
\end{proof}

\begin{remark}\label{Remark well define admissible}
If $f$ is a $\mathbb{Z}_2$ harmonic function, then $f(z_1) = 0$. So we can always write $f$ as $$f(z) = \text{Re} (\int_{z_1}^z \dfrac{u(z)}{\sqrt{D(z)}} dz)$$
for some entire function $u(z)$ in $\IC$.
However, clearly only admissible entire functions $u(z)$ in $\IC$ make the above expression well-defined $\mathbb{Z}_2$ harmonic functions. 
\end{remark}

In general, an integral such as
$\displaystyle \int_{z_1}^z \dfrac{u(z)}{\sqrt{D(z)}}dz$ depends on the path chosen in $\Sigma$ from a lifting of $z_1$ to a lifting of $z$, where $\Sigma$ is the surface described by $\{ (w, z) \in \mathbb{C}^2~|~w^2 = D(z)\}$, regarded as a double branched cover of $\mathbb{C}$.\\

The first homology group of $\Sigma$ is generated by the loops $C_2, \cdots, C_{2n}$ as follows:\\

Choose any path $\gamma_j$ from $z_1$ to $z_j$ in $\IC$ that does not meet any other point in $K$. Then $C_j$ is defined to be going along $\gamma_j$ from $z_1$ to $z_j$ in any branch in $\Sigma$, and go back along $\gamma_j$ from $z_j$ to $z_1$ but in another branch.\\

In fact,
one can visualize the double-branched cover from $\Sigma$ to $\mathbb{C}$ as follows:\\
 
\includegraphics[scale = 0.08]{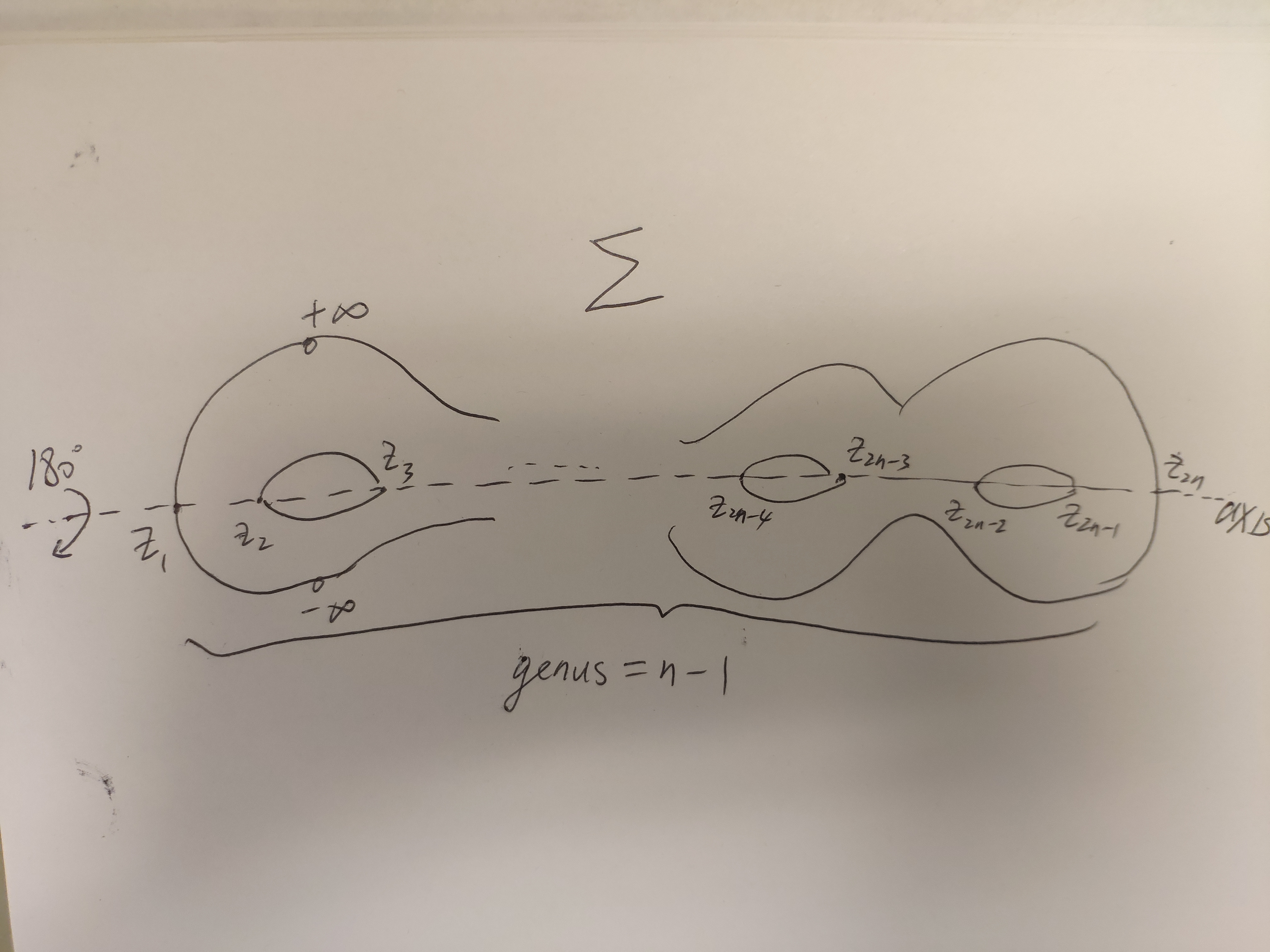}

In this graph, let $\iota$ be the rotation of $\Sigma$ along the axis by $180^o$. Then the branched double cover from $\Sigma$ to $\IC$ is given by the $\Sigma$ quotient $\iota$. The intersection of $\Sigma$ with the axis is exactly the branching points $z_1, z_2, \cdots, z_{2n}$. The two punctured points $+\infty$ and $-\infty$ are the two points above infinity.\\

\begin{proposition}
    
The definition of admissible in thoerem \ref{theorem of two dimensional correspondence} is well-defined.
\end{proposition}

\begin{proof}
Clearly if one chooses a different sign of $\sqrt{D(z)}$,
$$ \text{Re}(\int_{z_1}^{z_j} \dfrac{u(z)}{\sqrt{D(z)}})dz$$
only changes sign.\\

Now suppose for some chosen paths $\gamma_j$ from $z_1$ to $z_j$, and for each $j$,
$$\text{Re}(\int_{z_1}^{z_j} \dfrac{u(z)}{\sqrt{D(z)}}dz) = \text{Re}(\int_{\gamma_j} \dfrac{u(z)}{\sqrt{D(z)}}dz) = 0.$$

Let $C_j$ be the loop on $\Sigma$ from $z_1$ to $z_j$ along $\gamma_j$ on the one branch, and then coming back from $z_j$ to $z_1$ along $\gamma_j$ but on the other branch. Then
$$\text{Re}(\int_{C_j} \dfrac{u(z)}{\sqrt{D(z)}}dz) = 2 ~ \text{Re}(\int_{\gamma_j} \dfrac{u(z)}{\sqrt{D(z)}} dz) = 0. $$

But since $C_2, C_3, \cdots, C_{2n}$ are generators of $H_1(\Sigma,\IZ)$, we know the integral
$$\int_{\gamma}\dfrac{u(z)}{\sqrt{D(z)}}dz $$
is purely imaginary along any closed loop $\gamma$ in $\Sigma$. So, the integral
$$\int_{z_1}^{z_j} \dfrac{u(z)}{\sqrt{D(z)}}dz $$
has a well-defined real part which does not depend on the path chosen from $z_1$ to $z_j$. In particular, the condition that ``their real parts are all zero" does not depend on the specific paths that are used to define them and is well defined.

\end{proof}

\begin{theorem}\label{theorem of equivalence 1}
There is a bijection between
\begin{itemize}
    \item Admissible entire functions $u(z)$ on $\IC$;
    \item $\mathbb{Z}_2$ harmonic functions $f$ on $X$.
\end{itemize}
The bijection is given by 
$$f(z) =\text{Re} (\int_{z_1}^{z} \dfrac{u(z)}{\sqrt{D(z)}} dz). ~~~ (*)$$

\end{theorem}

\begin{proof}
~

\begin{itemize}
    \item $f(z) \Rightarrow u(z)$:   From theorem \ref{entire function}, we know the entire function $u(z)$ exists. By corollary \ref{corollary of |f| = 0 on K},  
 $$0 = f(z_j) - f(z_1) = \text{Re} (\int_{z_1}^{z_j} \dfrac{u(z)}{\sqrt{D(z)}} dz).$$
\end{itemize}  

 \begin{itemize}
    \item $u(z) \Rightarrow f(z)$:
    As shown in the Appendix A, if $u(z)$ is admissible, then the real part of the integration $$\int_{z_1}^{z} \dfrac{u(z)}{\sqrt{(D(z))}}dz$$ doesn't depend on the path chosen from $z_1$ to $z$ except its sign (which is also fixed is a sign convention for $\sqrt{D(z)}$ is chosen). So its real part is a well-defined harmonic function.  Call this function $f$. We have $f(z_j) = 0$ for any $ j = 1, 2, \cdots, 2k$.  In order for $f$ to be a $\mathbb{Z}_2$ function, we need to check that the monodromy of $f$ around each point $z = z_j$ is $-1$.\\
    
    Near each $z = z_i$, the Taylor series looks like
    $$ \dfrac{u(z)}{\sqrt{D(z)}} = \sum\limits_{k=0}^{+\infty} A_k (z-z_i)^{k-\frac{1}{2}}.$$
    So
    $$f = \text{Re}(\sum\limits_{k=0}^{+\infty}A_k(k + \dfrac{1}{2})^{-1}(z - z_i)^{k+\frac{1}{2}}), $$
    which indeed has the correct monodromy around $z = z_i$.
\end{itemize}  
\end{proof}

Since $u(z)$ is uniquely determined by $f$, sometimes it is convenient to use the notation $u_{f}(z)$ to emphasize its dependence on $f$. However, if there is no ambiguity, the subscript $f$ can be omitted.\\

Suppose $(\rho, \theta)$ are the polar coordinates centered at the origin. Suppose $B_R$ is a ball with a large radius $R$ centered at the origin such that $K \subset B_R$. On $\IR^2 \backslash B_R$. Using Fourier series, 
\begin{equation}\label{Fourier series of f at infinity}
    f =  (c_0 \ln \rho + \text{Re}(\sum\limits_{n=1}^{+\infty} c_n \rho^n e^{in\theta})) + (b_0 + \text{Re}(\sum\limits_{n=1}^{+\infty} \dfrac{b_n}{\rho^n} e^{in \theta})) = U + B, 
\end{equation} 
where $b_n, c_n$ are complex numbers except that $b_0, c_0$ must be real. Capital letters $U$ and $B$, representing the first and second parentheses, respectively, are the ``unbounded" and ``bounded" parts at infinity. We may also call them $U(f)$ and $B(f)$ respectively to emphasize their dependency on $f$. Note that $U(f)$ is actually a harmonic function on the whole $\IR^2\backslash \{O\}$, where $O$ is the origin. (But $B(f)$ is not.)\\

\begin{theorem}\label{uniqueness theorem}
Suppose that there are two $\mathbb{Z}_2$ harmonic functions $f_1, f_2$ in $X$ such that $U(f_1) = U(f_2)$, then $f_1 = f_2$. In particular, if $U(f) = 0$, then $f = 0$.

\end{theorem}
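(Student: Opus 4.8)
The plan is to set $g = f_1 - f_2$ and show $g \equiv 0$. The difference $g$ is again a $Z2$ harmonic function on $X$, and since the decomposition $f = U(f) + B(f)$ in \eqref{Fourier series of f at infinity} is linear in $f$, the hypothesis gives $U(g) = U(f_1) - U(f_2) = 0$. Hence on $\IR^2 \setminus B_R$ we have $g = b_0 + \text{Re}\big(\sum_{n\ge 1} b_n \rho^{-n} e^{in\theta}\big)$; in particular $|g|$ is bounded near infinity and $|\nabla g| = O(\rho^{-2})$ there. Near each $z_i$, Theorem \ref{local neibourhood theorem} gives $g = \text{Re}\,\phi_i$ with $\phi_i(z) = a_i(z-z_i)^{k_i + \frac12} + O(|z - z_i|^{k_i + \frac32})$, so with $r_i = |z - z_i|$ we have $|g| = O(r_i^{k_i + 1/2})$ and $|\nabla g| = |\phi_i'| = O(r_i^{k_i - 1/2})$.

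The main step is a Green's-identity (energy) argument on $\Omega_{\epsilon,R} = B_R \setminus \bigcup_i \overline{B_\epsilon(z_i)}$. Although $g$ is only a section of $\mathcal{L}$, the quantities $|\nabla g|^2$, the $1$-form $g\,{*}dg$, and $g\,\partial_n g$ are honest $\IR$-valued objects (each is built from an even number of $\mathcal{L}$-valued factors), so Green's identity applies and, using $\Delta g = 0$,
\[
\int_{\Omega_{\epsilon,R}} |\nabla g|^2 \, dA \;=\; \oint_{\partial B_R} g\,\partial_\rho g\, ds \;-\; \sum_i \oint_{\partial B_\epsilon(z_i)} g\,\partial_{r_i} g\, ds .
\]
On $\partial B_\epsilon(z_i)$ the integrand is $O(\epsilon^{2k_i})$ by the local estimates, and the circumference is $2\pi\epsilon$, so that boundary term is $O(\epsilon^{2k_i+1}) \to 0$ as $\epsilon \to 0$ (here $k_i \ge 0$). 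Letting $\epsilon \to 0$ — the left side converges monotonically, and is finite precisely because $|\nabla g|$ has locally finite $L^2$ norm — yields $\int_{B_R\setminus K} |\nabla g|^2 = \oint_{\partial B_R} g\,\partial_\rho g\, ds$. Since $|g| = O(1)$ and $|\partial_\rho g| = O(R^{-2})$ on $\partial B_R$, the right side is $O(R^{-1})$. The left side is nonnegative and nondecreasing in $R$ while the right side tends to $0$, so both are identically $0$; hence $\int_X |\nabla g|^2 = 0$.

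Therefore $\nabla g \equiv 0$ on the connected manifold $X$, so $|g|$ is constant on $X$; by Corollary \ref{corollary of |f| = 0 on K}, $|g|$ extends continuously to $\IR^2$ with $|g| = 0$ on $K$, forcing $|g| \equiv 0$, i.e. $f_1 = f_2$. The ``in particular'' assertion is the case $f_2 = 0$, since $U(0) = 0$. I expect the only delicate points to be (i) the validity of Green's identity for the $\mathcal{L}$-valued $g$, dealt with by passing to the honest $1$-form $g\,{*}dg$, and (ii) the vanishing of the inner boundary terms: here the local-finiteness of $\|\nabla g\|_{L^2}$ is exactly what guarantees both that $\int_X |\nabla g|^2$ is finite and that the exponents $k_i + \tfrac12$ are at least $\tfrac12$, which is what kills those terms. (Alternatively, one can bypass the energy computation entirely: $|g|$ is subharmonic on $X$ and, being bounded, extends to a bounded subharmonic function on all of $\IR^2$ by removability of isolated singularities; a subharmonic function on $\IR^2$ that is bounded above is constant, and the constant equals $|g|(z_1) = 0$.)
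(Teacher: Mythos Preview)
Your energy argument is correct and complete: the inner boundary contributions vanish because $k_i\ge 0$, the outer one because $g$ is bounded and $\partial_\rho g=O(\rho^{-2})$, and the conclusion $\nabla g\equiv 0$ forces $g\equiv 0$ either by monodromy or, as you note, by Corollary~\ref{corollary of |f| = 0 on K}. The parenthetical alternative via Liouville for bounded-above subharmonic functions on $\IR^2$ is also valid.

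The paper's proof takes a different route, closer in spirit to your alternative than to your main argument. It observes that $b_0-|f|$ is superharmonic on all of $\IR^2$ and tends to $0$ at infinity, hence is nonnegative by the maximum principle; this forces $f-b_0$ to have a sign for large $\rho$, which is then contradicted by inspecting the leading nonzero term $\text{Re}(b_k\rho^{-k}e^{ik\theta})$ of the Fourier expansion, since that term changes sign in $\theta$. So the paper combines subharmonicity with an explicit asymptotic contradiction, whereas your main argument is a pure integration-by-parts / energy computation that never looks at individual Fourier modes. Your approach has the advantage of being more robust (it would adapt with little change to higher dimensions or to other elliptic operators), while the paper's argument is shorter and exploits the explicit expansion that is already on the page.
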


\begin{proof} Since $f = f_1 - f_2$ is a $\mathbb{Z}_2$ harmonic function with $U(f) = 0$. So it suffices to prove $f = 0$.\\

Assume when $\rho > R$,
$$ f = b_0 + \text{Re}(\sum\limits_{n=1}^{+\infty} \dfrac{b_n}{\rho^n} e^{in \theta}).$$

We may assume $b_0 \geq 0$, since otherwise we can change the trivialization of $\mathcal{L}$ and flip the sign of $f$. Then $b_0 - |f|$ is a super-harmonic function on the entire $\IR^2$ which approaches $0$ at infinity. By maximum principle, it cannot be negative. So in particular, when $\rho > R$, $f - a_0 = \text{Re}(\sum\limits_{n=1}^{+\infty} \dfrac{a_n}{\rho^n}e^{in\theta}) \geq 0$.

Suppose $f \neq 0$. Then let $k$ is the smallest positive integer such that $a_k \neq 0$. Then $$ 0 \leq f - a_0 = \text{Re} (\dfrac{a_k}{\rho^k}e^{ik\theta}) + O(\rho^{-(k+1)}),$$
which is impossible when $\rho$ is large enough.

\end{proof}

\begin{corollary}\label{unique corollary with bounded zero locus}
If the zero locus of $f$ is bounded, then $U(f) = c_0 \ln \rho$ for some constant $c_0 \neq 0$. In addition, such $f$ (if exists) is unique up to a scalar multiplication by a constant.
\end{corollary}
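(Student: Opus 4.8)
The plan is to work in the trivialization $\iota$ fixed on the exterior of the origin, under which $f$ is an honest harmonic function with the Fourier expansion \eqref{Fourier series of f at infinity}. Write $U(f)=c_0\ln\rho+\mathrm{Re}\,g(z)$, where $g(z)=\sum_{n\ge 1}c_nz^n$ is entire with $g(0)=0$; the claim $U(f)=c_0\ln\rho$ is precisely that $g\equiv 0$, and then one must also see $c_0\ne 0$. The first step is to exploit the hypothesis. Since the zero locus of $f$ is bounded and $K$ is finite, there is an $R_0$ with $K\subset B_{R_0}$ and $f$ nowhere zero on the connected set $\{\rho>R_0\}$, on which $\mathcal{L}$ is genuinely trivial because the monodromy around a large circle is $(-1)^{2n}=1$; being continuous and nonvanishing there, $f$ has a constant sign, and after possibly switching $\iota$ to the opposite trivialization — which reverses the signs of $f$, of $c_0$, and of $g$ — I may assume $f>0$ on $\{\rho>R_0\}$. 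Averaging \eqref{Fourier series of f at infinity} over a circle $\{\rho=r\}$ kills every nonzero frequency and leaves $c_0\ln r+b_0>0$ for all large $r$, forcing $c_0\ge 0$.

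The crux is to convert positivity of $f$ into a growth bound on $e^{-g}$. Enlarging $R_0$ if needed so that the tail $\sum_{n\ge 1}b_nz^{-n}$ is bounded by a constant $M$ on $\{\rho\ge R_0\}$, the inequality $f>0$ rearranges to $\mathrm{Re}\,g(z)\ge -c_0\ln\rho-b_0-M$ there, i.e. $|e^{-g(z)}|\le e^{b_0+M}\rho^{c_0}$ for $\rho\ge R_0$; combined with the trivial bound on the compact disk $\{\rho\le R_0\}$ this gives $|e^{-g(z)}|\le C(1+|z|^{c_0})$ on all of $\IC$. An entire function of polynomial growth is a polynomial (Cauchy estimates), so $e^{-g}$ is a polynomial of degree at most $\lfloor c_0\rfloor$; but $e^{-g}$ has no zeros, hence it is a nonzero constant, so $g$ is constant and, since $g(0)=0$, $g\equiv 0$. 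Therefore $U(f)=c_0\ln\rho$. Finally $c_0\ne 0$: if $c_0=0$ then $U(f)=0$, and Theorem \ref{uniqueness theorem} forces $f\equiv 0$, contradicting boundedness of the zero locus of $f$, which in that case is all of $X$. (This is really a hands-on instance of B\^ocher's theorem on isolated singularities of positive harmonic functions, applied to $w\mapsto f(1/w)$ near $w=0$; one could alternatively quote B\^ocher.)

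For the uniqueness clause, suppose $f_1$ and $f_2$ are $Z2$ harmonic functions on $X$ with bounded zero loci, so by the first part $U(f_i)=c_0^{(i)}\ln\rho$ with $c_0^{(i)}\ne 0$. The assignment $f\mapsto U(f)$ is linear, since it extracts the $\ln\rho$-coefficient and the nonnegative-frequency part of the Fourier expansion; hence the $Z2$ harmonic function $c_0^{(2)}f_1-c_0^{(1)}f_2$ has vanishing $U$, and Theorem \ref{uniqueness theorem} yields $c_0^{(2)}f_1=c_0^{(1)}f_2$. Thus $f_1$ and $f_2$ agree up to multiplication by the nonzero constant $c_0^{(1)}/c_0^{(2)}$.

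I expect the genuine work to sit in the second paragraph — the passage from ``$f>0$ on the exterior'' to ``$g\equiv 0$'' — whereas the first paragraph is a matter of care (triviality of $\mathcal{L}$ on the exterior, so that ``$f$ has a constant sign'' is meaningful, and the fact that the series defining $g$ really is entire, as is already implicit in the discussion of \eqref{Fourier series of f at infinity}), and the uniqueness clause is then immediate from Theorem \ref{uniqueness theorem}.
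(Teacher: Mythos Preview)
Your proof is correct and follows the same outline as the paper: fix a sign so that $f>0$ on an exterior region, deduce that the entire harmonic function $\mathrm{Re}\,g=U(f)-c_0\ln\rho$ is bounded below by $-c_0\ln\rho$ (up to a constant), conclude it vanishes, and then invoke Theorem~\ref{uniqueness theorem} for both $c_0\neq 0$ and the uniqueness clause. The only substantive difference is that where the paper simply cites as ``well known'' the Liouville-type fact that a harmonic function on $\IR^2$ with a logarithmic lower bound is constant, you supply a clean self-contained proof via the exponential: $e^{-g}$ is entire, nonvanishing, and of polynomial growth, hence a nonzero constant. Your preliminary step establishing $c_0\ge 0$ by circle-averaging is not strictly needed for that argument (polynomial growth of $e^{-g}$ holds regardless of the sign of $c_0$), but it does no harm.
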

\begin{proof}
Suppose the zero locus of $f$ is bounded and in general, 
$$U(f) = c_0 \ln \rho + \text{Re}(\sum\limits_{n=1}^{+\infty} c_n \rho^n e^{in\theta}).$$
We may assume $f > 0$ when $\rho > R$ for some large enough $R$.\\

Then $U(f) - c_0\ln \rho$ is a harmonic function on the entire $\IR^2$ which is bounded below by $-c_0 \ln \rho$ when $\rho > R$. It is well known that such a harmonic function with a lower bound must be a constant. But this constant can only be $0$ based on the expression of $U(f)$. So, the only possibility is $U(f) = c_0 \ln \rho$.
The uniqueness of $f$ up to a scalar multiplication follows directly from theorem \ref{uniqueness theorem}.
\end{proof}

One may wonder for which choice of $U(f)$ the $\mathbb{Z}_2$ harmonic function exists. In fact, we have the following theorem:

\begin{theorem}\label{theorem of equivalence 2}
There is a bijection between:
\begin{itemize}
    \item The equivalence classes $[h]$ of harmonic functions on $\IR^2 \backslash \{O\}$, where $O$ is the origin, given by:
    $$h_1 \sim h_2 ~~~~ \text{if and only if} ~~~ h_1 - h_2 \rightarrow C ~~ \text{when} ~~ |z| \rightarrow +\infty, $$
    where $C$ is a constant.
    \item $\mathbb{Z}_2$ harmonic functions $f$ on $X$.
\end{itemize}
The bijection is given by sending $f$ to $U(f)$.

\end{theorem}
We have already shown that the map $f \rightarrow U(f)$ is injective. So, the task is to prove that it is also surjective.\\

We need a lemma first:
\begin{lemma}
Fix a path from $z_1$ to each $z_{j}$ on the double branched cover $\Sigma$ of $\IC$ so that the following expression makes sense: $$v_{jk} = \int_{z_1}^{z_j} \dfrac{z^k}{\sqrt{D(z)}} dz.$$
And let $\Vec{v}_k = \{v_{2k}, v_{3k}, \cdots, v_{2n,k}\}$. Then the following $2n - 1$ vectors $$\text{Re}(\Vec{v}_0), \text{Im}(\Vec{v}_0), \text{Re}(\Vec{v}_1), \text{Im}(\Vec{v}_1), \cdots, \text{Re}(\Vec{v}_{n-2}), \text{Im}(\Vec{v}_{n-2}), \text{Im}(\Vec{v}_{n-1})$$ are linearly independent over $\IR$.
\end{lemma}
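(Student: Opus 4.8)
The plan is to translate the asserted $\IR$-linear independence into a uniqueness statement for admissible polynomials, and then to feed that into Theorems~\ref{entire function}, \ref{equivalence with entire function theorem}, and~\ref{uniqueness theorem}. First I would reformulate. Suppose real scalars $\alpha_0,\beta_0,\dots,\alpha_{n-2},\beta_{n-2},\beta_{n-1}$ give a relation among the $2n-1$ listed vectors, that is, for every $j=2,3,\dots,2n$,
$$\sum_{k=0}^{n-2}\bigl(\alpha_k\,\text{Re}(v_{jk})+\beta_k\,\text{Im}(v_{jk})\bigr)+\beta_{n-1}\,\text{Im}(v_{j,n-1})=0.$$
Using $\alpha\,\text{Re}(w)+\beta\,\text{Im}(w)=\text{Re}\bigl((\alpha-i\beta)w\bigr)$ and setting $c_k=\alpha_k-i\beta_k$ for $0\le k\le n-2$ and $c_{n-1}=-i\beta_{n-1}$, the relation becomes $\text{Re}\bigl(\sum_{k=0}^{n-1}c_k v_{jk}\bigr)=0$ for all $j$. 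Since $v_{jk}=\int_{z_1}^{z_j}z^k\,dz/\sqrt{D(z)}$, this is exactly the statement that the polynomial $u(z)=\sum_{k=0}^{n-1}c_kz^k$ is admissible in the sense of Definition~\ref{definition of admissible}; note that $\deg u\le n-1$ and that its leading coefficient $c_{n-1}$ is purely imaginary. Hence the lemma reduces to: \emph{an admissible polynomial $u$ with $\deg u\le n-1$ and purely imaginary leading coefficient must be identically $0$.}

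Next I would kill the leading coefficient. Given such a $u$, Theorem~\ref{equivalence with entire function theorem} produces a $Z2$ harmonic function $f$ on $X$ with $u_f=u$, and Theorem~\ref{entire function} gives $df=\text{Re}\bigl(u(z)\,dz/\sqrt{D(z)}\bigr)$. On $\IR^2\setminus B_R$ the bundle $\mathcal L$ is trivial (the monodromy around a large circle is $(-1)^{2n}=1$), so under that trivialization $f$ is an ordinary single-valued function and $\oint_{|z|=R'}df=0$ for $R'>R$. On the other hand, since $\deg u\le n-1$ the Laurent expansion at infinity reads $u(z)/\sqrt{D(z)}=c_{n-1}/z+O(|z|^{-2})$, so $\oint_{|z|=R'}u(z)\,dz/\sqrt{D(z)}=\pm 2\pi i\,c_{n-1}$, the sign only recording the chosen branch of $\sqrt D$. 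Taking real parts forces $\text{Im}(c_{n-1})=0$; combined with $c_{n-1}$ purely imaginary, this gives $c_{n-1}=0$, so in fact $\deg u\le n-2$.

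Then I would finish. With $\deg u\le n-2$ the $1$-form $u(z)\,dz/\sqrt{D(z)}$ is $O(|z|^{-2})\,dz$ near infinity, so $|\nabla f|=O(|z|^{-2})$ there and $f(z)=\text{Re}\bigl(\int_{z_1}^z u\,dz/\sqrt{D}\bigr)$ converges uniformly to a constant as $|z|\to\infty$. In particular $f$ is bounded near infinity, hence $U(f)$, which is a harmonic function on $\IR^2\setminus\{O\}$ of the form $c_0\ln\rho+\text{Re}(\sum_{n\ge1}c_nz^n)$, is bounded near infinity; this forces every $c_n$ (including $c_0$) to vanish, i.e. $U(f)=0$. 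By Theorem~\ref{uniqueness theorem}, $f\equiv0$, so $df=\text{Re}\bigl(u(z)\,dz/\sqrt{D(z)}\bigr)\equiv0$; writing $u/\sqrt D=P+iQ$ this says the real $1$-form $P\,dx-Q\,dy$ vanishes identically, hence $u/\sqrt D\equiv0$ and therefore $u\equiv0$. Consequently all of $\alpha_k,\beta_k,\beta_{n-1}$ vanish, which is the lemma.

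I do not expect a deep obstacle; the one point that needs care is the interplay of the two degree regimes for $u$. One must use $\deg u\le n-1$ so that the contour integral at infinity is well defined and equals precisely $c_{n-1}$ (no lower-order terms contribute), and then, once $c_{n-1}=0$ has lowered the degree to $\le n-2$, use that stronger bound to make $f$ bounded at infinity so that Theorem~\ref{uniqueness theorem} applies. The ambiguities coming from the choice of path from $z_1$ to $z_j$ and from the branch of $\sqrt D$ are immaterial here: they are treated in Appendix~A, and since every conclusion we reach is that some quantity vanishes, the residual $\pm$ sign never affects anything.
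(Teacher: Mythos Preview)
Your proposal is correct and follows essentially the same route as the paper: both reformulate the linear relation as the admissibility of $u(z)=\sum_{k\le n-1}c_kz^k$ with $c_{n-1}$ purely imaginary, kill $c_{n-1}$ via single-valuedness of $f$ near infinity (you phrase this as $\oint_{|z|=R'}df=0$, the paper as well-definedness of $\text{Re}(-ib_{n-1}\ln z)$, which are the same observation), and then use $U(f)=0$ together with Theorem~\ref{uniqueness theorem} to conclude $f=0$ and hence $u=0$. Your write-up is a bit more explicit in justifying $df\equiv0\Rightarrow u\equiv0$, but the argument is otherwise identical.
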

\begin{proof}
Suppose $$a_0\text{Re}(\Vec{v}_0) + b_0\text{Im}(\Vec{v}_0)+ \cdots+ a_{n-2}\text{Re}(\Vec{v}_{n-2})+b_{n-2} \text{Im}(\Vec{v}_{n-2})+b_{n-1} \text{Im}(\Vec{v}_{n-1}) = 0.$$

Let $$c_0 = a_0 - ib_0, c_1 = a_1 - ib_1, \cdots, c_{n-2} = a_{n-2} - ib_{n-2}, c_{n-1} = -ib_{n-1}. $$

And let $u(z) = c_0 + c_1 z + \cdots + c_{n-2}z^{n-2} + c_{n-1}z^{n-1}$.\\

Then 
$$\text{Re}(\int_{z_1}^{z_j} \dfrac{z^k}{\sqrt{D(z)}} dz) = 0, ~~~~ \text{for all}~~ j = 2, 3, \cdots, 2n. $$

So by theorem \ref{theorem of equivalence 1},
$$f = \text{Re}(\int_{z_1}^{z}\dfrac{u(z)}{\sqrt{D(z)}}) dz $$
defines a $\mathbb{Z}_2$ harmonic function.\\

Note that 
$$ \dfrac{u(z)}{\sqrt{D(z)}}  = - \dfrac{ib_{n-1}}{z} + O(|z|^{-2}) $$

So
$$f = \text{Re}(-ib_{n-1} \ln z) + C + O(|z|^{-1}), $$
where $C$ is a constant. But in order for $\text{Re}(-ib_{n-1} \ln z)$ to be well defined, $b_{n-1}$ must be $0$. So $f = C + O(|z|^{-1})$. So $U(f) = 0$. Hence $f = 0$, which implies $u(z) = 0$.
\end{proof}

Now we prove that $f \rightarrow U(f)$ is surjective.\\

Choose any $$U(f)  = c_0 \ln \rho + \text{Re}(\sum\limits_{n=1}^{+\infty} c_n \rho^n e^{in\theta}).$$

Consider the following formal Laurent series  $$\sqrt{D(z)} (\dfrac{c_0}{z} + \sum\limits_{n=1}^{+\infty} n c_n z^{n-1}) = \sum\limits_{k = -\infty}^{+\infty} C_kz^k.$$ 
Let $$u_0(z) = \sum\limits_{k=0}^{+\infty} C_k z^k.$$

Then $u_0(z)$ is an entire function in $\IC$ such that 
$$|dU(f) - \text{Re}(d(\dfrac{u_0(z)}{\sqrt{D(z)}}))| = O(\dfrac{1}{|z|^{n}}) ~~~~ \text{as} ~~ |z| \rightarrow +\infty. $$

By the previous lemma, there exist real numbers $a_0, b_0, a_1, b_1, \cdots a_{n-2}, b_{n-2}, b_{n-1}$ and a corresponding polynomial
$$u_1(z) = (a_0 - ib_0) + (z_1 - ib_1)z + \cdots + (a_{n-2} - ib_{n-2})z^{n-2} - ib_{n-1}z^{n-1}, $$

such that 

$$\text{Re}(\int_{z_1}^{z_j} \dfrac{u_0(z)}{\sqrt{D(z)}} dz) = \text{Re}(\int_{z_1}^{z_j} \dfrac{u_1(z)}{\sqrt{D(z)}} dz), $$
for each $j = 2, 3, \cdots, 2n$.\\

Thus $$f = \Re(\int_{z_1}^z \dfrac{(u_0(z) - u_1(z))}{\sqrt{D(z)}} dz) $$
defines a $\mathbb{Z}_2$ harmonic function.\\

Since $$df =  \text{Re}(\dfrac{u_0(z) - u_1(z)}{\sqrt{D(z)}} dz) = dU(f) + \text{Re}(\dfrac{ib_{n-1}}{z}) + O(|z|^{-1}) ~~ \text{when} ~~|z| \rightarrow +\infty,$$
so
$$f = U(f) + C + O(|z|^{-1}) ~~~ \text{when} ~~ |z| \rightarrow +\infty, $$
where $C$ is a constant. This implies that the unbounded part of $f$ is indeed the proscribed $U(f)$ and that the proof of theorem \ref{theorem of equivalence 2} is finished.\\

Note that the combination of theorem \ref{theorem of equivalence 1} and theorem \ref{theorem of equivalence 2} is just theorem \ref{theorem of two dimensional correspondence} stated in the introduction.\\

The remainder of this section concerns the case of the higher dimensions.\\

Recall that now the ambient space is $X = \IR^n \backslash K$, where $n \geq 3$ and $K$ is a compact smooth manifold of dimension $n-2$ embedded smoothly into $\IR^n$. Let $\mathcal{L}$ be the unique real bundle in $X$ with monodromy $-1$ around $K$. We can similarly define the $\mathbb{Z}_2$ harmonic functions $f$ in $X$ with monodromy $-1$ around $K$. \\

Suppose $f$ is a $\mathbb{Z}_2$ harmonic function in $U\backslash K$ with monodromy $-1$ around $K$, where $K$ is a smooth compactly embedded sub-manifold of codimension $2$ in $\IR^n$, and $U$ is a bounded open subset of $\IR^n$ that contains $K$. Suppose that the $L^2$ norm of $|\nabla f|$ is finite on $U$.\\

In addition, choose a large enough number $R_0$ and an open ball $B_{R_0}$ of radius $R_0$ centered at the origin such that $K \subset B_{R_0}$. Fix a trivialization of $\mathcal{L}$ in $\IR^n \backslash B_{R_0}$. Then $f$ is viewed as an ordinary function on $\IR^n \backslash B_{R_0}$. \\

\begin{lemma}
The function $|f|$ is a subharmonic function on $U \backslash K$. That is to say, for any ball $B$ centered at $P$ in $U$, where $P \notin K$,
$$|f(P)| \leq \dfrac{1}{\text{Vol}(B)} \int_{B \backslash K} |f|d^n \mathbf{x}.$$
Note that we allow $B$ to have intersection with $K$.
\end{lemma}

\begin{proof}

If $B \subset (U \backslash K)$, then $f$ is an ordinary harmonic function in $B$ up to the sign. So, the inequality is true by the mean value property.\\

In general, suppose $B_r$ is the ball of radius $r$ centered at $P$. Since $P \notin K$, the inequality is true when $r$ is small such that $B_r \cap K = \emptyset$.\\

Let

$$\phi(r) = \dfrac{1}{\text{Vol}(\partial B_r)} \int_{\partial B_r\backslash K}|f| d^n\mathbf{x}.$$

$$\text{Vol}(\partial B_r) \cdot \phi'(r) = \lim\limits_{\epsilon \rightarrow 0} (\int_{B_r\backslash K_{\epsilon}} \Delta(|f|) d^n \mathbf{x} + \int_{\partial K_{\epsilon}\cap B_r} \dfrac{\partial f}{\partial \vec{n}} d\sigma) \geq \liminf\limits_{\epsilon \rightarrow 0} \int_{\partial K_{\epsilon}\cap B_r} \dfrac{\partial f}{\partial \vec{n}} d\sigma. $$

Here

$$ |\int_{\partial K_{\epsilon}\cap B_r} \dfrac{\partial f}{\partial \vec{n}} d\sigma| \leq \int_{\partial K_{\epsilon}} |\nabla f| d\sigma \leq \sqrt{\text{Vol}_{n-1}(\partial K_{\epsilon})} (\int_{\partial K_{\epsilon}}|\nabla f|^2 d\sigma)^{\frac{1}{2}} \leq C_1(\int_{\partial K_{\epsilon}} |\nabla f|^2 \epsilon d\sigma)^{\frac{1}{2}}, $$

where $C_1$ is a constant that doesn't depend on $\epsilon$.\\

On the other hand, fix any small enough $\delta$, there exists a constant $C$ which doesn't depend on $\delta$, such that

$$\int_{K_{2\delta} \backslash K_{\delta}} |\nabla f|^2 d^n \mathbf{x} \geq C \int_{\delta}^{2\delta} \int_{\partial K_{\rho}} |\nabla f|^2 d\sigma d\rho \geq \dfrac{C}{2} \inf\limits_{\delta < \epsilon < 2\delta} \int_{\partial K_{\epsilon}} |\nabla f|^2 \epsilon d\sigma . $$

Note that $|\nabla f|$ has locally finite $L^2$ norm. The above inequality implies that

$$\liminf\limits_{\epsilon \rightarrow 0} \int_{\partial K_{\epsilon}} |\nabla f|^2 \epsilon d\sigma \leq \liminf\limits_{\delta \rightarrow 0}\int_{K_{2\delta} \backslash K_{\delta}} |\nabla f|^2 d^n \mathbf{x} = 0. $$

So

$$ \liminf\limits_{\epsilon \rightarrow 0} \int_{\partial K_{\epsilon}\cap B_r} \dfrac{\partial f}{\partial \vec{n}} d\sigma = 0.$$

So $\phi'(r) \geq 0$. And $|f(P)| \leq \phi(r)$ for any $r > 0$ such that $B_r \subset U$.
\end{proof}

\begin{corollary}
If we extend $|f|$ by setting $|f| = 0$ on $K$, then $|f|$ is a sub-harmonic function on the entire $U$.
\end{corollary}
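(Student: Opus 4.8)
The plan is to read this off the Lemma with essentially no extra work, since the Lemma already supplies the sub-mean-value inequality at every point of $U \setminus K$ for balls that are allowed to cross $K$. Write $g$ for the function on $U$ equal to $|f|$ on $U \setminus K$ and to $0$ on $K$. First I would note that $K$ has Lebesgue measure zero, so for every ball $B \subset U$ one has $\int_{B \setminus K}|f|\,d^n\mathbf{x} = \int_{B} g\,d^n\mathbf{x}$; hence the Lemma says exactly
$$g(P) \;\le\; \frac{1}{\mathrm{Vol}(B)}\int_{B} g\,d^n\mathbf{x}$$
for every $P \in U \setminus K$ and every ball $B$ centered at $P$ with $B \subset U$.

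It then remains only to check the same inequality when the center $P$ lies on $K$. But there $g(P) = 0$ by the definition of the extension, while $g \ge 0$ everywhere, so the inequality $g(P) = 0 \le \frac{1}{\mathrm{Vol}(B)}\int_B g\,d^n\mathbf{x}$ is automatic for every such ball. The two cases together give the sub-mean-value inequality at all points of $U$, which is the assertion. For bookkeeping one should record that every integral here is finite: $\bigl|\nabla|f|\bigr|\le|\nabla f|$ pointwise, so $|f|\in W^{1,2}(U\setminus K)$, and since $K$ has codimension $2$ it has zero $W^{1,2}$-capacity and is therefore removable, giving $|f|\in W^{1,2}(U)\subset L^1_{\mathrm{loc}}(U)$; this is in any case already implicit in the statement of the Lemma.

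I do not expect a real obstacle here: all the analytic content sits in the Lemma, and the extension across $K$ succeeds precisely because $|f|$ is nonnegative and we extend it by its minimal value $0$. The one point worth flagging is that this Corollary asserts only the sub-mean-value inequality (the notion of subharmonicity used in the Lemma), not upper semicontinuity at points of $K$; the genuinely stronger statement that $|f|$ extends \emph{continuously} across $K$, i.e. that $|f|\to 0$ as one approaches $K$, is a separate matter and is what Theorem~\ref{high dimension |f| = 0 on K thoerem} will establish, with the present Corollary serving as one of its ingredients rather than as an equivalent.
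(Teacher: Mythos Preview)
Your proposal is correct and matches the paper's treatment: the paper states the corollary without proof, treating it as immediate from the preceding Lemma, and even adds the remark ``Note that we haven't proved that the extension is continuous yet,'' which is exactly the caveat you flag. Your write-up makes explicit the two trivial steps (measure-zero removal of $K$ in the integral, and the $g(P)=0$ case on $K$) that the paper leaves to the reader.
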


Note that we haven't proved that the extension is continuous yet.

\begin{lemma}
Suppose $K_{\epsilon}$ is a small tubular neighborhood of $K$ which is a $D(\epsilon)$ bundle over $K$, where $D(\epsilon)$ is a disk of radius $\epsilon$ whose local polar coordinates are $(\rho, \theta)$. (Note that $\theta$ may not be well-defined globally along $K$). For a small contractible relatively open subset $U$ of $K$, let $U_{\epsilon}$ be the $\epsilon$ tubular neighborhood of $U$ (which is roughly $U \times D(\epsilon)$ with minor modifications on the metric). Then there exists a constant $C$ which doesn't depend on $\epsilon$, but may depend on $U$, such that

$$\dfrac{1}{\text{Vol}(U_{\epsilon})} \int_{U_{\epsilon}} |f| d^n \mathbf{x} \leq  C || \nabla f||_{L^2(U_{\epsilon})}. $$
Here $\text{Vol}(U_{\epsilon})$ volume of $U_{\epsilon}$.\\

In particular
$$\lim\limits_{\epsilon \rightarrow 0}\dfrac{1}{\text{Vol}(U_{\epsilon})} \int_{U_{\epsilon}} |f| d^n \mathbf{x} = 0.$$
\end{lemma}

\begin{proof}

Because $f$ has modronomy $-1$ around $K$, we know that for any $\theta_0$,
$$2|f| \leq \int_0^{2\pi}|\partial_{\theta} f| d\theta.$$
$$\dfrac{1}{2\pi}\int_0^{2\pi}|f| d\theta \leq \dfrac{1}{2} \int_0^{2\pi}|\partial_{\theta} f|d\theta \leq C \int_0^{2\pi} |\nabla f| \rho d\theta, $$
where $C$ is a constant. So there is a constant $C'$ such that

$$ \int_{U_{\epsilon}} |f| d^n\mathbf{x} \leq C' \int_{U_{\epsilon}} |\nabla f| \rho d^n \mathbf{x} \leq C' \epsilon (\sqrt{\text{Vol}(U_{\epsilon})})||\nabla f||_{L^2(U_{\epsilon})}.$$
And there is another constant $C_1$ such that
$$C' \epsilon \leq C_1 \sqrt{\text{Vol}(U_{\epsilon})}.$$
So
$$\dfrac{1}{\text{Vol}(U_{\epsilon})} \int_{U_{\epsilon}} |f|d^n \mathbf{x} \leq C_1 || \nabla f||_{L^2(U_{\epsilon})}. $$

The final assertion in the lemma is because $$\lim\limits_{\epsilon \rightarrow 0} ||\nabla f||_{L^2(U_{\epsilon})} = 0. $$

\end{proof}

\begin{lemma}
Let $C(\delta)$ be a subset of $K$ such that for each $P \in C(\delta)$, there exists a ball $B$ centered at $P$ such that
$$\dfrac{1}{\text{Vol(B)}} \int_{B}|f| d^n\mathbf{x} \leq \delta. $$
Then $C(\delta) = K$.
\end{lemma}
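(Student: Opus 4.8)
The plan is to show that the set $C(\delta)$ is both open and closed in $K$, and nonempty, so that connectedness of $K$ (a compact connected manifold — or, if $K$ is merely compact, one argues component by component) forces $C(\delta) = K$. Actually, the cleanest route avoids connectedness entirely: I would argue by contradiction, assuming some $P_0 \in K \setminus C(\delta)$, and use a covering/compactness argument together with the two preceding lemmas to derive that the average of $|f|$ over a suitable region is simultaneously bounded below (because $P_0 \notin C(\delta)$, every ball around $P_0$ has average $> \delta$) and made arbitrarily small (by the tubular-neighbourhood estimate). Let me spell out which of these I actually expect to work.

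\textbf{Step 1 (set-up via the sub-harmonicity lemma).} By the first lemma of this appendix (and its corollary), $|f|$ extended by $0$ on $K$ is sub-harmonic on all of $U$. In particular, for any $P \in K$ and any ball $B = B_r(P)$ with $B \subset U$, the average $\frac{1}{\mathrm{Vol}(B)}\int_B |f|\,d^n\mathbf{x}$ is a monotone non-decreasing function of $r$ (this is the standard consequence of sub-harmonicity, and it is exactly $\phi(r) \geq |f(P)| = 0$ refined to solid averages). So the condition defining $C(\delta)$ — existence of \emph{one} ball with average $\leq \delta$ — is equivalent to the averages over \emph{all sufficiently small} balls centered at $P$ being $\leq \delta$.

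\textbf{Step 2 (the tubular estimate gives the bound near every point of $K$).} Fix an arbitrary $P \in K$. Choose a small contractible relatively open neighbourhood $U'$ of $P$ in $K$ as in the second lemma, so that for the $\epsilon$-tubular neighbourhood $U'_\epsilon$ one has
$$\frac{1}{\mathrm{Vol}(U'_\epsilon)}\int_{U'_\epsilon} |f|\,d^n\mathbf{x} \leq C\,\|\nabla f\|_{L^2(U'_\epsilon)} \xrightarrow[\epsilon\to 0]{} 0.$$
Now I want to dominate a genuine ball $B_r(P)$ (for small $r$) by such a tubular region, up to a bounded volume factor. For small $r$, $B_r(P)$ is comparable to a product $U'' \times D(r)$ where $U''$ is an $(n-2)$-disk of radius $\sim r$ in $K$; this sits inside $U'_{r'}$ for $r' \sim r$, and the volumes $\mathrm{Vol}(B_r(P))$ and $\mathrm{Vol}(U'_{r'}\big|_{\text{over }U''})$ are comparable (both $\sim r^n$) with constants independent of $r$. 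Hence
$$\frac{1}{\mathrm{Vol}(B_r(P))}\int_{B_r(P)} |f|\,d^n\mathbf{x} \;\leq\; C''\,\frac{1}{\mathrm{Vol}(U'_{r'})}\int_{U'_{r'}}|f|\,d^n\mathbf{x} \;\leq\; C''C\,\|\nabla f\|_{L^2(U'_{r'})},$$
and the right-hand side tends to $0$ as $r \to 0$ since $|\nabla f| \in L^2$ locally. Therefore for every $P \in K$ there is a ball around $P$ with average $\leq \delta$, i.e. $P \in C(\delta)$. This proves $C(\delta) = K$.

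\textbf{Main obstacle.} The delicate point is Step 2: making rigorous that a Euclidean ball $B_r(P)$ centered on the singular locus is comparable, in both volume and in the integral of $|f|$, to the model tubular slab $U'' \times D(r)$ appearing in the second lemma — i.e. controlling the metric distortion of the tubular-neighbourhood coordinates uniformly down to $\epsilon \to 0$. Since $K$ is a fixed smooth compact submanifold, its second fundamental form and the normal-bundle geometry are bounded, so the Fermi (tubular) coordinates $(\text{point of }K, \rho, \theta)$ are bi-Lipschitz to the flat model $K \times D(\epsilon)$ with constants independent of $\epsilon$ once $\epsilon$ is below the injectivity radius of the normal exponential map; this is where all the ``$C$ independent of $\epsilon$'' claims ultimately come from. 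Granting that standard fact, the comparison of volumes and of $\int |f|$ is routine, and no connectedness hypothesis on $K$ is needed — the argument is entirely local around each $P$.
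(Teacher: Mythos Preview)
Your Step 2 contains a genuine gap. The displayed inequality
\[
\frac{1}{\mathrm{Vol}(B_r(P))}\int_{B_r(P)} |f|\,d^n\mathbf{x} \;\leq\; C''\,\frac{1}{\mathrm{Vol}(U'_{r'})}\int_{U'_{r'}}|f|\,d^n\mathbf{x}
\]
with $C''$ independent of $r$ is false in the regime $n\geq 3$ of Appendix~B. You correctly observe that $\mathrm{Vol}(B_r(P))$ is comparable to the volume of the tubular slab over the shrinking disk $U''$, which is $\sim r^n$; but the denominator you actually wrote on the right is $\mathrm{Vol}(U'_{r'})\sim |U'|\cdot r'^{\,2}$ with $U'$ \emph{fixed}, and the ratio $\mathrm{Vol}(U'_{r'})/\mathrm{Vol}(B_r(P))\sim |U'|/r^{n-2}$ blows up as $r\to 0$. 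If instead you shrink the base to $U''$ and invoke the tubular lemma there, the constant in that lemma---which is explicitly allowed to depend on the base set---scales like $|U''|^{-1/2}\sim r^{-(n-2)/2}$ (trace through the proof: $C_1\sim C'\epsilon/\sqrt{\mathrm{Vol}(U_\epsilon)}$), and the bound again degenerates. Unwinding the estimate directly gives only $(\text{ball average at }P)\lesssim r^{1-n/2}\|\nabla f\|_{L^2(B_r(P))}$, and $|\nabla f|\in L^2_{\mathrm{loc}}$ alone does not force this to tend to~$0$.

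The paper avoids this by not attempting to bound the ball average at \emph{every} point of $K$. It argues instead that $C(\delta)$ is closed and dense in $K$. For density: given any small relatively open $U\subset K$, the tubular lemma drives the average of $|f|$ over $U_\epsilon$ to~$0$; since the $\epsilon$-balls centred at points of $U$ tile $U_\epsilon$ up to bounded overlap, an averaging (pigeonhole/Fubini) argument shows that at least \emph{one} point of $U$ has ball average $\leq\delta$, so $C(\delta)\cap U\neq\emptyset$. Closed plus dense gives $C(\delta)=K$. The essential conceptual difference from your approach is existential versus universal: the tubular estimate only buys you one good point per neighbourhood, not all of them at once---and that is exactly why the paper needs the separate closedness step, which your direct route would have made unnecessary had it worked.
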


\begin{proof}
Clearly $C(\delta)$ is a closed subset of $K$. It suffices to prove that $C(\delta)$ is dense in $K$.\\

For any small contractible relatively open subset $U$ of $K$, we know from the previous lemma that

$$\dfrac{1}{\text{Vol}(U_{\epsilon})} \int_{U_{\epsilon}} |f| d^n \mathbf{x} \leq  C || \nabla f||_{L^2(U_{\epsilon})}. $$

But $$\lim\limits_{\epsilon \rightarrow 0} ||\nabla f||_{L^2(U_{\epsilon})} = 0. $$ 
So the ``average value" of $|f|$ on $U_{\epsilon}$ approaches $0$ as $\epsilon$ goes to $0$. This implies that there exists at least one point $P \in U$ such that $P$ is also in $C(\delta)$. Since this is true for any small contractible relative open subset of $K$, we know that $C(\delta)$ is dense.
\end{proof}

\begin{theorem}\label{high dimension |f| = 0 on K thoerem}
The function $|f|$ can be extended continuously in all $U$ setting $|f| = 0$ in $K$. This extended version of $|f|$ is a subharmonic function in $U$.
\end{theorem}

\begin{proof}
We have already proved that the extended version of $|f|$ is subharmonic. So, it suffices to prove that it is continuous at any point $P$ in $K$.\\

From the previous lemma, for any small enough $\delta > 0$, $P \in C(\delta)$. So there exists a ball $B$ centered at $P$ such that 

$$\dfrac{1}{\text{Vol}(B)} \int_B |f| d^n \mathbf{x} \leq \delta. $$

Let $B_0$ be the ball centered at $P$ whose radius is half of the radius of $B$. For any $P' \in B_0$, let $B(P')$ be the ball centered at $P'$ whose radius is also half of the radius of $B$. Then $B(P') \subset B$. And $\text{Vol}(B(P')) = \dfrac{1}{2^n} \text{Vol}(B)$. So

$$\dfrac{1}{\text{Vol}(B(P'))} \int_{B(P')}|f|d^n\mathbf{x} \leq \dfrac{1}{\text{Vol}(B(P'))} \int_{B}|f|d^n\mathbf{x} \leq 2^n \delta. $$

Since $|f|$ is subharmonic, for any $P' \in B_0$,
$$ |f(P')| \leq 2^n \delta.$$

Letting $\delta \rightarrow 0$ implies that

$$\limsup\limits_{P' \rightarrow P} |f(P')| = |f(P)| = 0.$$

Thus $|f|$ is continuous at $P$.

\end{proof}

Before we prove theorem \ref{theorem of higher dimensional analog}, we introduce an inspiring special case of theorem \ref{theorem of higher dimensional analog} due to Taubes:

\begin{corollary}\label{Taubes corollary}
There is a unique $\mathbb{Z}_2$ harmonic function on $\IR^n\backslash K$ such that
$$f - 1 \rightarrow 0 ~~~~ \text{when} ~~~ |\mathbf{x}| \rightarrow +\infty. $$
\end{corollary}
\textit{Taubes' proof of corollary \ref{Taubes corollary}}\\

Consider all smooth $\mathbb{Z}_2$ function that equals $1$ when $\mathbf{x}$ is large enough. Let the Dirichlet energy be

$$\mathcal{E}(f) = \int_{X}|df|^2 d\mathbf{x}. $$

Consider the following space:\\
 
Suppose $f$ is a smooth $\mathbb{Z}_2$ function such that $1 - |f| = 0$ far away from the origin and that $|f|$ extends continuously to $K$ by setting $|f| = 0$ on $K$. Then we define the $M$ ``norm" of $f$ as its Dirichlet energy:
$$ ||f||^2_M = \int_{X}|df|^2 d\mathbf{x}.$$
The space of completion of the above type of smooth $\mathbb{Z}_2$ functions under the $M$ norm is still denoted as $M$.

Note that if $f \in M$, then by Sobolev embedding, the $L^{\frac{2n}{n-2}}(X)$ norm of $1 - |f|$ is bounded up to a constant by $||f||_M$. As a corollary, $f$ also has finite $L^2(B)$ norm for any ball $B$ whose closure is in $X$.\\

Choose a sequence of smooth functions $f_n \in M$ whose Dirichlet energy approaches the infimum in $M$. One may assume, by possibly choosing a subsequence, that

\begin{itemize}
    \item $f_n$ weakly converges to $f$ in $M$.
    \item $1 - |f_n|$ weakly converges to $1 - |f|$ in $L^{\frac{2n}{n-2}}(X)$.
\end{itemize}

The second bullet implies $f \in M$. The first bullets implies
$$\liminf\limits_{n \rightarrow +\infty} ||f_n||_M = \inf\limits_{g\in M_C}||g||_M \geq ||f||_M.$$

Since $f \in M$, by the way that the sequence $f_n$ was constructed, $f$ achieves the minimal Dirichlet energy in $M$.\\

By the method of variation, under any local trivialization of $\mathcal{L}$, one sees that $f$ is a weak solution of the Laplacian equation on any small ball $B$ whose closure is in $X$. Since $f$ is also in $L^2(B)$, by a standard elliptic regularity argument, $f$ is harmonic on $B$. Thus $f$ is a $\mathbb{Z}_2$ harmonic function on the entire $X$.\\

Now we return to theorem \ref{theorem of higher dimensional analog}. We first need a lemma.

\begin{lemma}
Let $U$ be a connected, bounded open subset of $\IR^n$ such that $K \subset U$. Suppose that the boundary $\partial U$ is smooth. Suppose that a trivialization of $\mathcal{L}$ on $\partial U$ is given. Suppose $f$ is a smooth function on $\bar{U}$. Then there exists a unique $\mathbb{Z}_2$ harmonic function $u$ on $\bar{U} \backslash K$ 
(that is to say, harmonic on $U \backslash K$ and continuous on $\bar{U}\backslash K$) with monodromy $-1$ around $K$ (we just call it $\mathbb{Z}_2$ harmonic function when there is no ambiguity) such that
$$v = f ~~~ \text{on $\partial U$}. $$
\end{lemma}

\begin{proof}
Consider all smooth $\mathbb{Z}_2$ functions $v$ on $U \backslash K$ that equals $f$ on the boundary.\\

Consider the energy function
$$E_U(v) = \int_{U} |\nabla v|^2 d^n\mathbf{x}.$$

The energy function is bounded above by the $H^1(U)$ norm. By Poincare inequality, the $H^1(U)$ norm is also bounded above by a constant times $E_U(v)$ plus a constant which only depends on $U$ and $f$.
Thus we can choose a sequence of $\mathbb{Z}_2$ smooth functions $v_n$ on $\bar{U}\backslash K$  that converges to the infimum of the energy among all such $\mathbb{Z}_2$ functions and weakly converges to a weak solution of the Laplacian equation in $H^1(B_R)$, which by choosing a further sub-sequence strongly converges in $L^2(B_R)$. By a standard regularity theorem, this converging $\mathbb{Z}_2$ function as a weak solution of the Laplacian equation is a $\mathbb{Z}_2$ harmonic function.\\

Suppose there are two $\mathbb{Z}_2$ harmonic functions on $\bar{U}\backslash K$ which has the same boundary values on $\partial U$. Then their difference $\tilde{v}$ is a $\mathbb{Z}_2$ harmonic function which equals $0$ on $\partial U$. But if $\tilde{v} \neq 0$, then let $V_0$ be any non-empty open connected component of $U \backslash (K \cup Z)$, where $Z$ is the zero locus of $\tilde{v}$. Then such $V_0$ exists and $|\tilde{v}|$ is an ordinary non-zero harmonic function on $V_0$ whose boundary equals zero. This violates the maximal principle. Hence, the $\mathbb{Z}_2$ harmonic function is unique.
\end{proof}

\textit{Proof of theorem \ref{theorem of higher dimensional analog}}.
\begin{itemize}
    \item $h \Rightarrow f$
\end{itemize}

Let $f_R$ be the $\mathbb{Z}_2$ harmonic function on $B_R \backslash K$ defined in the previous lemma. We may extend $f_R$ continuously to the entire $X$ by setting $f_R = h$ outside of $B_R$, still denoted as $f_R$. Suppose $R_0 \ll R_1 \ll R \ll R_2$. Let $E_R$ be the short for $E_{B_R}$ which is the energy defined in the previous lemma.\\

Since $f_R = h$ outside of $B_R$, we have

$$E_{R_2}(f_R) - E_{R_2}(h) = E_{R}(f_R) - E_{R}(h). $$

Let $E(f_R) = \lim\limits_{R_2 \rightarrow +\infty} (E_{R_2}(f_R) - E_{R_2}(h)) = E_{R}(f_R) - E_R(h).$\\

Note that by the way $f_R$ was chosen, $f_R$ has the minimal $E_R$ energy among all $\mathbb{Z}_2$ functions in $B_R \backslash K$ equal to $u$ in $\partial B_R$. And $f_{R_1}$ is an example of such $\mathbb{Z}_2$ function. Thus, $E_R(f_R) \leq E_R(f_{R_1}).$\\

So $E(f_{R}) \leq E(f_{R_1})$. That is, $E(f_R)$ does not increase in $R$. Since it is bounded below by $0$, it must converge. We may assume $E = \lim\limits_{R \rightarrow +\infty} E(f_R)$.\\

Suppose $R_1$ is large enough such that $E(f_{R_1}) - E < \epsilon$. Then for any $R > R_1$, $E(f_{R_1}) - E(f_{R}) < \epsilon$. In particular, $|E_R(f_R) - E_R(f_{R_1})| < \epsilon$. On the other hand, $f_R$ has the minimal $E_R$ energy among all $\mathbb{Z}_2$ harmonic functions on $B_R \backslash K$ that is equal to $u$ on $\partial B_R$. And $\dfrac{f_R + f_{R_1}}{2}$ is one of such kind of $\mathbb{Z}_2$ functions. So
$$E_R(f_R + f_{R_1}) = 4 E_R(
\dfrac{f_R + f_{R_1}}{2}) \geq 4 E_R(f_R). $$

And

$$E_R(f_R - f_{R_1}) = 2(E_R(f_R)) + E_R(f_{R_1})) - E_R(f_R + f_{R_1}) < 2\epsilon. $$

Note that $f_R - f_{R_1}$ has bounded support in $\IR^n \backslash K$. By Sobolev embedding, there exists a constant $C$ which does not depend on $R$ such that
$$||f_R -f_{R_1} ||_{L^{\frac{2n}{n-2}}(X)} \leq C \epsilon.$$

This implies that $f_{R} - f_{R_1}$ is a Cauchy sequence (as $R$ goes to infinity) in $L^{\frac{2n}{n-2}}(X)$. So, it converges to some $\mathbb{Z}_2$ function $f - f_{R_1}$ as $R \rightarrow +\infty$. This function $f$ is locally a weak solution of the Laplacian equation and locally in $L^2$, so it is a $\mathbb{Z}_2$ harmonic function.\\

On the other hand,

$$||f_R - h||_{L^{\frac{2n}{n-2}}(\IR^n \backslash B_{R_1})} \leq C\epsilon.$$

So $$||f - h||_{L^{\frac{2n}{n-2}}(\IR^n \backslash B_{R_1})} \leq 2C\epsilon.$$
So for any ball $B$ of radius $2$ in $\IR^n \backslash B_{R_1}$,
$$||f - h||_{L^2(B)} \leq C_1 \epsilon, $$
where $C_1$ is a large constant.\\

But $f - h$ is a harmonic function in $B$. So, by a regularity argument, assuming that $B'$ is the ball of radius $1$ with the same center as $B$, we have another constant $C_2$ such that
$$||f - h||_{L^{\infty}(B')} \leq C_2\epsilon. $$

Note that $C_2$ does not depend on the particular ball $B$ chosen. So, in fact, for some $ R_2 \gg R_1$, we have

$$||f - h||_{L^{\infty} (\IR^n\backslash B_{R_2})} \leq C_2 \epsilon. $$

Letting $\epsilon \rightarrow 0$ implies that $f$ converges to $h$ uniformly as $R \rightarrow +\infty$. 

\begin{itemize}
    \item $f \Rightarrow h$
\end{itemize}

We may use the same argument as in the $h \Rightarrow f$ case. However, we introduce a different argument here.\\

In the spherical coordinates, the Laplacian equation is written as
$$\Delta f = \partial^2_rf + \dfrac{(n-1)}{r}\partial_rf + \dfrac{1}{r^2}\Delta_{S^{n-1}}f = 0.$$

And when $R$ is large enough, $$f = A_0 + B_0R^{-(n-2)} + \sum\limits_{m = 1}^{+\infty} A_m R^m\Phi_{m} + \sum\limits_{m=1}^{+\infty} B_mR^{-(m+n-2)}\Psi_{m},$$

where $\Phi_m$ and $\Psi_m$ are spherical harmonics with eigenvalue $(m+n-2)m$, $A_m$ and $B_m$ are constants.\\

Let 

$$h = A_0 + \sum\limits_{m = 1}^{+\infty} A_m R^m\Phi_{m}.$$

Then $h$ is the harmonic function on $\IR^n$ that we want.

\begin{remark}
Like in the case $\IR^2$, one sees from the proof that the $\mathbb{Z}_2$ function $u$ depends on $h$ and $K$ continuously in a suitable sense.\\
\end{remark}

Now we prove the corollary \ref{Corollary of higher dimensional case}. We may assume that our $S^1$ embeds in $\mathbb{R}^3$ as follows: 

$$S^1 = \{( \cos s,  \sin s, 0) ~ | ~ s \in [0, 2\pi)\}. $$
Clearly there is an $S^1$ action of $\mathbb{R}^3$ that restricts to a rotation on the embedded $S^1$.\\

We may also choose local coordinates $(s, \rho, \theta)$ in a neighborhood of $S^1$ to represent the point: $((\cos s)\cdot(1 + \rho \cos\theta), (\sin s) \cdot (1 + \rho \cos\theta), \rho\sin\theta)$.

\begin{lemma}
    There is a sequence of linearly independent harmonic functions in $\mathbb{R}^3$ that are invariant under the $S^1$ action. 
\end{lemma}

\begin{proof}
    In fact, these harmonic functions can be chosen to have polynomial growth. Otherwise, there are even uncountable such harmonic functions.\\
    
Examples are: $f = |x|^{l(l+1)}P_l(\cos s)$, where $|x| = \sqrt{x_1^2 + x_2^2 + x_3^2}$ is the distance to the origin, and $P_l$ are Lendgre polynomials. It is well known that $P_l(\cos s)$ are spherical harmonics, and hence $f$ are the harmonic functions as desired.
    
\end{proof}

By theorem \ref{theorem of higher dimensional analog}, we immediately have the following corollary:

\begin{corollary}\label{corollary of infinite f}
    There is a sequence of linearly independent $\mathbb{Z}_2$ harmonic functions in $\mathbb{R}^2 \backslash S^1$ with monodromy $-1$ around $S^1$ that are invariant under the $S^1$ action.
\end{corollary}

Here is a sketch of the proof of corollary \ref{Corollary of higher dimensional case}:

\begin{proof}In the local coordinates,
    $$\Delta f = 0 \iff  \partial_{\rho}(\rho(1 + \rho \cos\theta)^{-1} \partial_{\rho}f) + \partial_{\theta}(\rho^{-1}(1 + \rho \cos\theta)\partial_{\theta}f) + \partial_{s}(\rho(1 + \rho\cos\theta)\partial_s f) = 0.$$
Suppose $f$ is invariant under $S^1$ action, that is to say, independent with $s$. Then $\partial_sf = 0$ and

$$\partial_{\rho}^2f + \dfrac{1}{\rho}\partial_{\rho}f + \dfrac{1}{\rho^2}\partial_{\theta}^2f + (\text{higher order terms}) = 0.$$

We can construct the Fourier series of $f$ as in the proof of lemma \ref{local neibourhood lemma}.\\
Suppose
$$f = \text{Re}(\sum\limits_{m=0}^{+\infty} a_m(\rho)e^{\frac{(2m+1)i\theta}{2}}),$$
where $a_m(\rho)$  are complex-valued functions in $\rho$.

$$a_m''(\rho) + \dfrac{1}{\rho}a_m'(\rho) - \dfrac{(2m+1)^2}{4\rho^2}a_m(\rho) + (\text{higher order terms}) = 0. $$

From corollary \ref{corollary of infinite f}, there are infinitely many choices of linearly independent $\mathbb{Z}_2$ harmonic functions $f$ that clearly satisfy $|f| = O(\rho^{-1})$. We use induction on the power.\\

By induction, suppose that there are infinitely many choices of linearly independent $f$ such that $f = O(\rho^{m})$ as $\rho \rightarrow 0$. All such $f$ form an infinite-dimensional vector space $V_m$. For each $f \in V_m$, we have $a_k(\rho) = O(\rho^{m})$ for all $k$. From the standard regular singular point theory in ODE, we know that $a_k(\rho) = O(\rho^{m+1})$ for all $k \neq m$ and that $a_m(\rho) = A \rho^{m + \frac{1}{2}} + O(\rho^{m+1})$. The map from $V_m$ to $\mathbb{R}$ sending $f$ to $A$ has an infinite-dimensional kernel. Clearly, any $\mathbb{Z}_2$ harmonic function in the kernel behaves like $O(\rho^{m+1})$ as $\rho \rightarrow 0$.
\end{proof}

\begin{remark}
    Strictly speaking, in the above argument we need to check that all the higher-order remainder terms have all derivatives of the correct order as $\rho \rightarrow 0$. Moreover, this boundary behavior is uniform for different Fourier components $a_m(\rho)$. However, these are all easy to check and are omitted here. In fact, there are many much stronger conclusions about what a $\mathbb{Z}_2$ harmonic function is like near its singular points (as a polyhomogeneous expansion) in \cite{SiqiHe2022ExistenceSymmetry},\cite{Donaldson2021DeformationsFunctions}, \cite{Parker2023Deformations3-Manifolds}, etc. We refer interested readers to these references.
\end{remark}

\section{The polynomial Pell equation}\label{Section: Pell}

In this section we prove theorem \ref{theorem of Pell}, theorem \ref{theorem of Pell property}, and describe the details of the practical method to check whether a given $D(z)$ is Pellian or not. We work in the two-dimensional case. We assume that $K$ and $D(z)$ are given.\\

By theorem \ref{theorem of two dimensional correspondence}, there exists a unique $\mathbb{Z}_2$ harmonic function, denoted as $G_D(z)$ (or $G(z)$ for short), such that the unbounded part at infinity $U(G)$ has only a term $\ln |z|$. By corollary \ref{unique corollary with bounded zero locus}, it is also the unique $\mathbb{Z}_2$ harmonic function with bounded zero locus up to a real-valued scalar multiplication. This is the second bullet of the theorem \ref{theorem of Pell}.

\begin{remark}
The reason we call it $G(z)$ is due to the following observation of Taubes: if one uses a conformal map to send $\IC$ to $\IC\IP^1 \backslash \{S\}$, where $S$ is the south pole (corresponding to the point of infinity), then a $\mathbb{Z}_2$ harmonic function on $\IC$ becomes a $\mathbb{Z}_2$ harmonic function on $\IC\IP^1 \backslash \{S\}$. And $G(z)$ becomes the $\mathbb{Z}_2$ version of the Green function on $\IC \IP^1$ evaluated at $S$. More details of this Green function can be found implicitly in \cite{Taubes2021ExamplesThree} \cite{CliffordTopologicalS2} or \cite{Donaldson2021DeformationsFunctions}.
\end{remark}

\begin{theorem}
When $f(z) = G(z)$, the entire function corresponding to $u(z)$ in theorem \ref{entire function} is a polynomial of degree $n-1$, and vice versa (up to a multiplication of a constant of real value). We call this polynomial  $U(z)$. Then $U(z)$ is uniquely determined by $D(z)$ (up to a multiplication by a constant of real value as always).
\end{theorem}

\begin{proof}
Once the first statement is proved, the uniqueness is a consequence of theorem \ref{uniqueness theorem} and corollary \ref{unique corollary with bounded zero locus}. The first statement (to be more precise, the first sentence in the above statement of the theorem) is a special case of the upcoming theorem \ref{theorem of when u is a polynomial} whose proof is deferred until there.
\end{proof}

\begin{definition}\label{Definition of U}
For each even degree polynomial $D(z)$ (may not have distinct roots), we define a polynomial $U_D(z)$ (up to a multiplication by a real number) in the following way:

\begin{itemize}
    \item If all roots of $D(z)$ are distinct, then $U_D(z)$ is the unique polynomial $U(z)$ from the previous theorem.
    \item Suppose $D(z) = D_1(z)^2D_0(z)$, such that all roots of $D_0(z)$ are distinct. Then  $U_D(z) = D_1(z) U_{D_0}(z)$.
\end{itemize}
If there is no ambiguity, we may omit the subscript $D$ and simply write it as $U(z)$.
\end{definition}

\begin{theorem}\label{theorem of when u is a polynomial}
Suppose $f$ is a $\mathbb{Z}_2$ harmonic function. Suppose the unbounded part of $f$ at infinity is
$$U(f) = c_0 \ln \rho + Re(\sum\limits_{n=1}^{+\infty} c_n\rho^ne^{in\theta}). $$
Then $u(z)$ is a polynomial of degree $k + n - 1$ if and only if $k$ is the largest integer such that $c_k$ is not zero. 

\end{theorem}

\begin{proof}
In fact, when $\rho$ is large enough, we have expression (\ref{Fourier series of f at infinity})
   $$f =  (c_0 \ln \rho + \text{Re}(\sum\limits_{n=1}^{+\infty} c_n \rho^n e^{in\theta})) + (b_0 + \text{Re}(\sum\limits_{n=1}^{+\infty} \dfrac{b_n}{\rho^n} e^{in \theta}))$$ $$ = b_0 +  \text{Re}(c_0 \ln z + \sum\limits_{n=1}^{+\infty} (c_nz^n + b_n z^{-n})).$$
So

$$df = \text{Re}(\dfrac{u(z )dz}{\sqrt{D(z)}}) = \text{Re}(\dfrac{c_0}{z}dz + \sum\limits_{n=1}^{+\infty} (nc_nz^{n-1} - nb_nz^{-n-1})dz). $$

So
$$u(z) = \sqrt{D(z)}(\dfrac{c_0}{z} + \sum\limits_{n=1}^{+\infty}(nc_nz^{n-1} - nb_nz^{-n-1})).$$

Therefore, $u(z)$ has a pole of order $n + k-1$ at infinity if and only if $k$ is the largest integer such that $c_k$ is non-zero. But $u(z)$ is an entire function. So, it has a pole of order $n+k-1$ at infinity if and only if it is a polynomial of degree $n+k-1$.
\end{proof}

Note that there is a corollary: the leading coefficient of $U(z)$ is real.\\

\textbf{Remark:} The $\mathbb{Z}_2$ harmonic function $f$ depends on $[h]$ and $D(z)$ continuously in a suitable sense. In fact, in a suitable sense (for example, the $C^0(B)$ topology, where $B$ is a large enough disk that contains all the roots of $D(z)$), the following expressions for $D(z)$:
$$\int_{z_1}^{z_j} \dfrac{z^k}{\sqrt{D(z)}}dz $$
and the following expressions for $[h]$
$$\int_{z_1}^{z_j}\dfrac{u_0(z)}{\sqrt{D(z)}}dz$$
are all continuous, where $u_0(z)$ is defined in the proof of theorem \ref{theorem of higher dimensional analog}.

Recall that suppose $D(z)$ is a polynomial with complex coefficients. The following equation is called the polynomial Pell equation, or the Pell equation for short:
$$ p(z)^2 - q(z)^2 D (z) = 1, $$
where $p(z)$ and $q(z)$ are unknown polynomials with complex coefficients.\\

If the Pell equation for $D(z)$ has a nontrivial solution (that is, $q(z) \neq 0$), then $D(z)$ is called Pellian. Ohterwise, it is called non-Pellian.\\

One obvious observation is that, $D(z)$ is non-Pellian when the degree is odd. And when the degree of $D(z)$ is $2$, it is Pellian if and only if it is square free. It is believed that when the degree of $D(z)$ is at least $4$, then Pellian polynomials are extremely rare. There are many studies on this topic; see, for example, \cite{Dubickas2004TheEquation} \cite{Kalaydzhieva2020OnFields} \cite{Nathanson1976PolynomialEquations}.
For the remainder of this section, we assume that the degree of $D(z)$ is even.\\

The following proposition and its corollaries are well known and are very easy to prove, and we omit the proofs here: 

\begin{proposition}
If $(p_1(z), q_1(z))$ is a solution to the Pell equation for $D(z)$ with minimal degree. Let 
$$p_k(z) + q_k(z) \sqrt{D(z)} = (p_1(z) + q_1(z) \sqrt{D(z)})^k. $$
Then $(\pm p_k, \pm q_k)$ are all the solutions to the Pell equation for $D(z)$.
\end{proposition}

\begin{corollary}
Suppose $(p, q)$ is a solution to the Pell equation for $D(z)$. Let $\tilde{U}(z) = \dfrac{p'(z)}{q(z)}$. Then different solutions will give the same $\tilde{U}(z)$ up to a multiplication by a real-valued constant. 
\end{corollary}

\begin{corollary}
If $D(z)$ is Pellian and if $(p(z), q(z))$ is a solution to the Pell equation. Let $\tilde{U}(z) = \dfrac{p'(z)}{q(z)}$. Then $\tilde{U}(z)$ is a polynomial and the following Abel integration has an algebraic expression:
$$\int \dfrac{\tilde{U}(z)}{\sqrt{D(z)}} dz = \ln (p(z) + q(z) \sqrt{D(z)}). $$
\end{corollary}

Thus in order to prove theorem \ref{theorem of Pell property}, the only task is to verify that $\tilde{U}(z)$ given above is the same as our $U_D(z)$ up to a scalar multiplicity by a real-valued constant. Here is the \textbf{proof of theorem \ref{theorem of Pell property}}:

\begin{proof}
We only need to check that $\text{Re}(\ln(p(z) + q(z)\sqrt{D(z)}))$ defines a $\mathbb{Z}_2$ harmonic function. Then by uniqueness given by theorem \ref{theorem of two dimensional correspondence}, $U_D(z)$ must be the same as $\tilde{U}(z)$.\\

First of all, $$(p(z) + q(z)\sqrt{D(z)})(p(z) - q(z) \sqrt{D(z)}) = 1.$$
So $$p(z) + q(z)\sqrt{D(z)} \neq 0 ~~~\text{for all}~~~ z \in \IC.$$

Thus $\ln(p(z) + q(z)\sqrt{D(z)})$ has a well-defined real part on the entire $\IC$ except a sign ambiguity for $\sqrt{D(z)}$. But a different choice of the sign of $\sqrt{D(z)}$ also changes only the sign of $\text{Re}(\ln(p(z) + q(z) \sqrt{D(z)}))$. So, this sign ambiguity corresponds to the correct monodromy for a $\mathbb{Z}_2$ function. Thus, $\text{Re}(\ln (p(z) + q(z)\sqrt{D(z)}))$ is a well-defined $\mathbb{Z}_2$ harmonic function on $X$.

\end{proof}

Note that this above theorem partially explains why we choose to have the second bullet of the definition \ref{Definition of U}. The author wonders whether $U(z)$ can be computed from $D(z)$ in a purely algebraic way (that is, not doing the integrals).\\

When the coefficients of $D$ are real numbers, then the coefficients of $U$ are also real numbers. In fact, it is not hard to check that $U_{\bar{D}} = \bar{U}_D$, where $\bar{D}$ means the polynomial whose coefficients are conjugates of the coefficients of $D$.\\

Finally, inspired by theorem \ref{theorem of two dimensional correspondence}, we have the following general method to check whether a polynomial $D(z)$ is Pellian or not.

\begin{theorem}
Suppose $D(z) \in \IC[z]$ is a monic polynomial of even degree with distinct roots. Then $D(z)$ is Pellian if and only if all the following integrals
    $$\int_{z_1}^{z_j} \dfrac{U(z)}{\sqrt{D(z)}} dz $$
    are integer multiplications of $\pi i$.
\end{theorem}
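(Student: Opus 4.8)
The plan is to connect the Pell condition to the existence of a global algebraic antiderivative of the differential $\tilde\omega = U(z)\,dz/\sqrt{D(z)}$ on the hyperelliptic curve $\Sigma$ (the double branched cover of $\mathbb{C}$ defined by $w^2 = D(z)$), and then to translate that into the periods of $\tilde\omega$ being the right multiples of $2\pi i$. The function $G(z)=\operatorname{Re}\int_{z_1}^z \tilde\omega$ is, by Theorem \ref{equivalence with entire function theorem}, already known to be a well-defined $Z2$ harmonic function (it is the one whose entire function $u$ is the polynomial $U(z)$, and whose unbounded part is $c_0\ln\rho$). The key point is that $D(z)$ is Pellian precisely when $\int\tilde\omega = \ln(p+q\sqrt D)$ for polynomials $p,q$, i.e.\ when $\exp(\int_{z_1}^z\tilde\omega)$ descends to a single-valued rational function $F = p + q\sqrt D$ on $\Sigma$ with no zeros or poles except possibly at the two points over $\infty$.

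The forward direction is immediate from the propositions already stated: if $(p,q)$ solves the Pell equation then $\tilde U = p'/q = U$ (by the previous theorem) and $\int_{z_1}^{z}\tilde\omega = \ln(p+q\sqrt D)$, so going once around any loop $\gamma$ on $\Sigma$ multiplies $p+q\sqrt D$ by a root of unity; but $(p+q\sqrt D)(p-q\sqrt D)=1$ means $p+q\sqrt D$ is a unit in the ring of functions regular away from $\infty$, hence on a genuine loop the monodromy of $\log$ is $2\pi i\,\mathbb{Z}$ — equivalently, running from $z_1$ to $z_j$ and back along a different sheet, $\int_{z_1}^{z_j}\tilde\omega$ differs from its conjugate path-value by such a multiple, and one checks directly that the values themselves are in $\pi i\,\mathbb{Z}$ because $p(z_j)+q(z_j)\sqrt{D(z_j)} = p(z_j) = \pm 1$ at a branch point $z_j$ (using $p^2 - q^2D = 1$ and $D(z_j)=0$). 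So $\int_{z_1}^{z_j}\tilde\omega = \log(\pm1) \in \pi i\,\mathbb{Z}$.

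For the converse, assume all the integrals $\int_{z_1}^{z_j}\tilde\omega$ lie in $\pi i\,\mathbb{Z}$. First I would argue this forces all periods of $\tilde\omega$ over $\Sigma$ to lie in $2\pi i\,\mathbb{Z}$: the homology of $\Sigma$ (computed, per Appendix A, via the branch points) is generated by loops built from the paths joining the $z_j$ and their sheet-swaps, and the period around such a loop is, up to sign, $2\operatorname{Re}$ or $2i\operatorname{Im}$ of one of these integrals — here the admissibility of $U$ (its real parts vanish) combined with the hypothesis (the values are purely imaginary multiples of $\pi i$) pins every period to $2\pi i\,\mathbb{Z}$. Then $F(P) := \exp\!\big(\int_{P_0}^{P}\tilde\omega\big)$ is a single-valued meromorphic function on $\Sigma$; since $\tilde\omega$ has no poles on $\mathbb{C}$ and $U$ has the right degree $n-1$, $F$ is holomorphic and nonvanishing on the affine curve and has at worst exponential-type behavior at the two points over $\infty$ — a residue/degree computation at $\infty$ (the same one that appeared in Theorem \ref{theorem of when u is a polynomial}, where the $c_0/z$ term gives $\tilde\omega \sim \pm dz/z$ near the two infinite points) shows $F$ is actually a rational function with a pole at one infinite point and a zero at the other. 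Writing $F = p + q\sqrt D$ in the basis $\{1,\sqrt D\}$ over $\mathbb{C}(z)$, the hyperelliptic involution sends $F \mapsto p - q\sqrt D$, and $F\cdot\sigma^*F$ is a rational function on $\mathbb{P}^1$ with no zeros or poles, hence a nonzero constant, which after normalizing the leading coefficient equals $1$: that is the Pell equation $p^2 - q^2 D = 1$, with $q\neq 0$ since $F$ is genuinely two-valued as a function of $z$.

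The main obstacle I expect is the bookkeeping in the converse's first step: correctly identifying a generating set of $H_1(\Sigma)$ in terms of the chosen paths from $z_1$ to the other branch points, and verifying that the hypothesis ``$\int_{z_1}^{z_j}\tilde\omega \in \pi i\,\mathbb{Z}$ for all $j$'' — together with admissibility — really does kill all periods mod $2\pi i$, rather than just the ``half'' of them visible from these particular paths. This is exactly the kind of computation the paper defers to Appendix A, so I would lean on that homology description; the only subtlety is tracking the factor of $2$ and the sign changes of $\sqrt{D}$ as one crosses the branch cuts, and confirming that the polynomial $U$ being of degree exactly $n-1$ (not larger) is what makes $F$ rational rather than transcendental at $\infty$.
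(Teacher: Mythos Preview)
Your proposal is correct and follows essentially the same route as the paper: both argue the converse by showing $F=\exp\int_{z_1}^z U/\sqrt{D}\,dz$ is a well-defined algebraic function on $\Sigma$, with the paper invoking GAGA for the polynomial-growth step where you sketch a residue argument at infinity, and the paper using the $-1$ monodromy of $G=\ln|F|$ where you use the hyperelliptic involution to obtain $F\cdot\sigma^*F=p^2-q^2D=\text{const}$. One small correction: the period of $\tilde\omega$ over the generator $C_j$ of $H_1(\Sigma)$ described in Appendix~A is exactly $2\int_{z_1}^{z_j}\tilde\omega$ (not ``$2\operatorname{Re}$ or $2i\operatorname{Im}$'' as you wrote), so the hypothesis $\int_{z_1}^{z_j}\tilde\omega\in\pi i\,\mathbb{Z}$ by itself already forces all periods into $2\pi i\,\mathbb{Z}$ without a separate appeal to admissibility---though your conclusion is unaffected by this slip.
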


    \begin{proof}
     If $D(z)$ is Pellian, suppose
     $$p(z)^2 - q(z)^2 D(z) = 1. $$
     Then $U(z) = \dfrac{p'(z)}{q(z)}$ and
     $$ \int_{z_1}^z \dfrac{U(z)}{\sqrt{D(z)}}dz = \ln(p(z) + q(z)\sqrt{D(z)}).$$
     But from Pell equation, $p(z_j) = \pm 1$ for any $j$, so
     $$\int_{z_1}^{z_j} \dfrac{U(z)}{\sqrt{D(z)}} dz $$
     is an integer multiplication of $\pi i$.\\
     
     The other direction: if all $$\int_{z_1}^{z_j} \dfrac{U(z)}{\sqrt{D(z)}} dz $$
     are integer multiplications of $\pi i$, then 
     $$\exp (\int_{z_i}^z \dfrac{U(z)}{\sqrt{D(z)}} dz) $$
     defines a holomorphic function on $\Sigma = \{(w, z) \in \IC^2 | w^2 = D(z)\}$ for the following reason:\\
     
     We know that the topology of $\Sigma$ is a surface of genus $n-1$ minus two points. The first homology of $\Sigma$ is generated by the following cycles:\\

 For each $z_j \neq z_1$, one goes from $z_1$ to $z_j$ in one branch and go back from $z_j$ to $z_1$ using the same path but in another branch. This loop on $\Sigma$ is denoted as $C_j (j \neq 1)$.\\

   We have assumed that the path integration of $\dfrac{U(z)dz}{\sqrt{D(z)}}$ is a multiplication of $2\pi i$ along any other loop $C_j$. So the exp of this integral is well-defined holomorphic function on $\Sigma$ without ambiguity.\\
     
     Finally, by GAGA (see for example, \cite{Neeman2007AlgebraicGeometry}),  a holomorphic function on $\Sigma$ with polynomial growth must be a polynomial in $w$ and $z$. Since $w^2 = D(z)$, we may write this polynomial as 
     $$p(z) + q(z)w = p(z) + q(z) \sqrt{D(z)}.$$
     
     And the unique $\mathbb{Z}_2$ harmonic function with bounded zero locus is $$G(z) = \ln |p(z) + q(z) \sqrt{D(z)}|. $$
     
Since $G(z)$ has monodromy $-1$ around any point $z = z_i$, a change of the sign of $\sqrt{D(z)}$ leads to a change of sign in $G(z)$ as well. We have
$$\ln|p(z) + q(z) \sqrt{D(z)}| = - \ln|p(z) - q(a) \sqrt{D(z)}|. $$
So $|p(z)^2 - q(z)^2 D(z)| = 1.$ We may assume $p(z)^2 - q(z)^2 D(z) = \epsilon$, where $\epsilon$ is a complex-valued constant with norm $1$. Then $(\epsilon^{-\frac{1}{2}}p(z), \epsilon^{-\frac{1}{2}}q(z))$ is a nontrivial solution to the Pell equation for $D(z)$.
    \end{proof}

\textbf{Remark:} Although we assumed that $D(z)$ does not have distinct roots here, it is standard to reduce the case more general to the distinct roots case because of the following well-known and easy-to-proof theorem (whose proof is thus omitted):

\begin{theorem}\label{old theorem 1}
If $U(z)$ has a common factor $(z - z_i)$ with $D(z)$. Let $k_1$ be the order of $(z - z_i)$ in $D(z)$ and $k_2$ be the order of $(z - z_i)$ in $U(z)$. Then $k_2 \geq k_1 - 1$. In addition,  $(z-z_i)^2 D(z)$ is Pellian if and only if $k_2 \geq k_1$. 
\end{theorem}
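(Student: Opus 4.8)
The plan is to analyze the local behavior of $U(z)$ at $z_i$ relative to $D(z)$ using the corollaries from Section 1, and then to translate the Pellian condition for $(z-z_i)^2 D(z)$ into a statement about the vanishing order of $U_{(z-z_i)^2 D}(z)$ at $z_i$, using Definition \ref{Definition of U} and the characterization of $G(z)$ via the Pell solution. First I would set up notation: write $D(z) = (z-z_i)^{k_1} \widehat{D}(z)$ with $\widehat{D}(z_i) \neq 0$, and $U(z) = (z-z_i)^{k_2}\widehat{U}(z)$ with $\widehat{U}(z_i)\neq 0$. Recall from Theorem \ref{entire function} that $df = \text{Re}(u(z)\,dz/\sqrt{D(z)})$ where here $u = U$ (the $G(z)$ case), and that near $z_i$ one has $df = \text{Re}(\phi_i'(z))$ with $\phi_i(z) = a_i(z-z_i)^{\ell_i + 1/2} + \cdots$ for the local degree $\ell_i + 1/2 = \deg_{z_i} G$. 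Comparing, $U(z)/\sqrt{D(z)}$ must have a Taylor expansion in half-integer powers of $(z-z_i)$ starting at exponent $\ell_i - 1/2 \geq -1/2$, i.e. the exponent $k_2 - k_1/2$ coming from $(z-z_i)^{k_2}/(z-z_i)^{k_1/2}$ satisfies $k_2 - k_1/2 \geq -1/2$ — but one must be careful that $k_1$ could be even or odd. If $k_1$ is even, $\sqrt{D(z)}$ has no branching at $z_i$, so $U/\sqrt{D}$ is meromorphic there; since $df$ (hence $\phi_i'$) must have monodromy $-1$ around $z_i$, the half-integer part forces... actually this is the subtle point, so let me restructure: the cleanest route is to use Corollary \ref{local neibourhood corollary}, which says $\phi_i(z)/\sqrt{D(z)}$ is holomorphic near $z_i$ (up to sign) with a zero of order $\ell_i$ when $\ell_i \geq 1$, combined with the formula $U(z) = \sqrt{D(z)}\,\phi_i'(z) = (\phi_i/\sqrt{D})' D + \tfrac12 (\phi_i/\sqrt D) D'$ from the proof of Theorem \ref{entire function}. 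From this one reads off $\text{ord}_{z_i} U \geq \min(\text{ord}_{z_i}((\phi_i/\sqrt D)' D),\ \text{ord}_{z_i}((\phi_i/\sqrt D) D'))$; since $\text{ord}_{z_i}(\phi_i/\sqrt D) = \ell_i$ and $\text{ord}_{z_i} D = k_1$, $\text{ord}_{z_i} D' = k_1 - 1$, we get $k_2 = \text{ord}_{z_i} U \geq \min(\ell_i - 1 + k_1,\ \ell_i + k_1 - 1) = \ell_i + k_1 - 1 \geq k_1 - 1$, giving the first inequality $k_2 \geq k_1 - 1$ (and showing $k_2 = k_1 - 1$ forces $\ell_i = 0$).

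For the second, equivalence statement, I would argue both directions. Suppose $(z-z_i)^2 D(z)$ is Pellian; let $D^{\sharp}(z) = (z-z_i)^2 D(z)$ and let $G^{\sharp}(z)$ be its associated $Z2$ harmonic function with bounded zero locus, with entire function $U^{\sharp} = U_{D^{\sharp}}$. The singular locus of $G^{\sharp}$ now includes $z_i$ with multiplicity coming from $(z-z_i)^2$ sitting inside $D^{\sharp}$ as a perfect square. By Definition \ref{Definition of U}, if we write $D^{\sharp}(z) = D_1(z)^2 D_0(z)$ with $D_0$ squarefree, then $U^{\sharp} = D_1 \cdot U_{D_0}$; the factor $(z-z_i)$ contributes (at least) one power of $(z-z_i)$ to $D_1$, but possibly more depending on the parity of $k_1$. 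The key computation is: $\text{ord}_{z_i} D^{\sharp} = k_1 + 2$, and the squarefree part $D_0$ contains $(z-z_i)$ iff $k_1 + 2$ is odd iff $k_1$ is odd, in which case $D_1$ has $(z-z_i)$ to the power $(k_1+1)/2$; if $k_1$ is even, $D_1$ has $(z-z_i)^{(k_1+2)/2}$ and $D_0$ is prime to $(z-z_i)$. I would then compute $\text{ord}_{z_i} U^{\sharp}$ in each parity case and compare with what the first part of the theorem (applied to $D^{\sharp}$) predicts, and match this against the Pellian criterion — presumably via the upcoming characterization (Theorem after this one, or Theorem \ref{old theorem 2}) that $D$ is Pellian iff $U_D(z)/\sqrt{D(z)}$ is the logarithmic derivative of $p + q\sqrt{D}$, equivalently iff $U_D/\sqrt{D}$ has no residues / all periods rational-ish. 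Concretely: $(z-z_i)^2 D$ Pellian means $\int U^{\sharp}/\sqrt{D^{\sharp}}\,dz = \ln(p+q\sqrt{D^{\sharp}})$ for polynomials $p,q$, and $\sqrt{D^{\sharp}} = (z-z_i)\sqrt{D}$, so this reads $\int U^{\sharp}/((z-z_i)\sqrt{D})\,dz$ algebraic; for this integrand to even be of the right shape (half-integer Taylor expansion at $z_i$ giving $-1$ monodromy, i.e. actually no monodromy at $z_i$ since $(z-z_i)^2$ is a square) one needs $U^{\sharp}/(z-z_i)$ to still vanish to appropriate order, which is exactly the condition $\text{ord}_{z_i} U^{\sharp} \geq 1$, and chasing the bookkeeping this becomes $k_2 \geq k_1$.

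The main obstacle I anticipate is the parity bookkeeping around $\sqrt{D(z)}$ at $z_i$: whether $k_1$ is even or odd changes whether $z_i$ is a genuine branch point of the hyperelliptic structure, and hence changes the local form of $U/\sqrt D$, the definition of $U_{D^\sharp}$ via its squarefree part, and the meaning of "admissible." I would handle this by splitting into the two cases $k_1$ even and $k_1$ odd from the start and carrying both through, checking at the end that the conclusion "$k_2 \geq k_1$" is uniform. A secondary subtlety is making precise the claim that $(z-z_i)^2 D$ Pellian is detectable from the order of vanishing of $U_{D^\sharp}$ at $z_i$ — this relies on the fact (which I'd extract from the $G(z)$/Pell correspondence already proved) that Pellianness of a polynomial is equivalent to its associated $U$ producing an $\int U/\sqrt{\cdot}$ with trivial additive periods, and that adding a square factor $(z-z_i)^2$ can only introduce an obstruction at $z_i$ itself. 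I'd want to double-check the edge case $\ell_i = 0$ (equivalently $k_2 = k_1 - 1$) separately, since that is precisely the borderline where $(z-z_i)^2 D$ should fail to be Pellian even if $D$ is.
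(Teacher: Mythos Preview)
Your approach diverges substantially from the paper's, and the gap is real rather than cosmetic. The paper's proof is a four-line algebraic computation: it starts from the Pell equation $p^2 - q^2 D = 1$ (so the theorem is implicitly stated for \emph{Pellian} $D$), differentiates to get $2pU - 2q'D - qD' = 0$ with $U = p'/q$, and reads off the orders at $z_i$. Since $p(z_i)^2 = 1$ forces $p(z_i)\neq 0$, the relation immediately gives $k_2 \geq k_1 - 1$. If $k_2 \geq k_1$, the same relation forces $(z-z_i)\mid q$, so $(p,\,q/(z-z_i))$ solves the Pell equation for $(z-z_i)^2 D$. The converse is again one line from the differentiated Pell identity for $(z-z_i)^2 D$.

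Your route through the $Z2$-harmonic machinery of Section~1 has a structural problem: Theorem~\ref{local neibourhood theorem}, Corollary~\ref{local neibourhood corollary}, and the formula $U = (\phi_i/\sqrt D)'D + \tfrac12(\phi_i/\sqrt D)D'$ from the proof of Theorem~\ref{entire function} are all set up under the standing assumption that $D$ has \emph{distinct} roots, i.e.\ $k_1 = 1$. In that case $k_2 \geq k_1 - 1 = 0$ is vacuous. The content of the theorem lies entirely in the repeated-root regime $k_1 \geq 2$, where $\phi_i$ is not defined and your order computation $k_2 \geq \ell_i + k_1 - 1$ has no footing. You recognize the parity issue later, but you never actually supply an argument for the first inequality in the case that matters.

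For the equivalence, your plan to pass through Definition~\ref{Definition of U}, squarefree decompositions, and period/residue conditions is in principle salvageable, but it is an enormous detour compared to the one-line observation that $(z-z_i)\mid q$ is exactly what is needed to manufacture a Pell solution for $(z-z_i)^2 D$. The moral: since the ambient hypothesis is that $D$ is Pellian, the identity $U = p'/q$ is available from the start, and everything follows by elementary divisibility in $\IC[z]$ --- no analysis required.
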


To finish this section, we write down a concrete example of a $\mathbb{Z}_2$ harmonic function. This example is due to Taubes, who told me in a private conversation. Exactly this example inspired me to think about the relationship between the polynomial Pell equation and $\mathbb{Z}_2$ harmonic functions.
\begin{example}
Let $D(z) = z^{2n}-1$, then
$$G(z) = \ln |z^n + \sqrt{z^{2n}-1}|. $$
\end{example}

\section{The zero locus}\label{Section: zero locus}

This short section studies the zero locus of the $\mathbb{Z}_2$ harmonic functions $f$ in $\mathbb{R}^2 \backslash K$ and its relationship to $u(z)$ in the correspondence given by theorem \ref{theorem of two dimensional correspondence}.\\

For an ordinary harmonic function in $\IR^2$, there is a famous theorem that states that the zero locus is a union of smooth curves such that, at any intersection points of the curves, there are only finitely many curves that intersect equiangularly. See, for example, the textbook \cite{Moore1930FoundationsTheory.} chapter X section 9.  We deduce an analog of this theorem for $\mathbb{Z}_2$ harmonic functions on $X$.

\begin{theorem}
Suppose $f$ is a $\mathbb{Z}_2$ harmonic function on $X$. Then the zero locus of $f$ is the union of locally finite smooth curves whose boundaries are empty or belong to $K$. At any intersection point of those smooth curves (including intersections at boundaries), they form an equiangular system. In addition to that, any subset of the zero locus of $f$ does not form a loop.
\end{theorem}

\begin{proof}
Here is the sketch. Suppose $P$ is a zero point of a $\mathbb{Z}_2$ harmonic function $f$.\\

If $P$ does not belong to $K$, then it is the same as the usual situation and a proof can be found in the textbook \cite{Moore1930FoundationsTheory.}. Otherwise,  $P$ belongs to $K$. Near $P$,
$$f = \text{Re}(\sum\limits_{n = 0}^{+\infty}  a_n z^{\frac{(2n+1)}{2}}) $$

Suppose $k \geq 0$ is the smallest number such that $a_k \neq 0$. We may assume $a_k$ is real, then 
$$f = a_k \rho^{\frac{(2k+1)}{2}} \cos(\frac{(2k+1)}{2}\theta) + \text{Re} (\sum\limits_{n=k+1}^{+\infty} a_n z^{\frac{(2n+1)}{2}}). $$

So $$f = a_k \rho^{\frac{(2k+1)}{2}} \cos(\frac{(2k+1)}{2}\theta) + O(\rho^{\frac{(2k+3)}{2}}). $$
$$df = a_k \rho^{\frac{(2k-1)}{2}}\cos(\frac{(2k+1)}{2}\theta) d\rho - a_k\rho^{\frac{(2k+1)}{2}}\sin(\frac{(2k-1)}{2} \theta)d\theta + O(\rho^{\frac{(2k+1)}{2}}). $$
The above two expressions, together with the implicit function theorem, are enough to show that in a neighborhood of $P$, the zero locus of $f$ are $2k+1$ smooth curves that intersect at their common boundary $P$ in an equiangular way.\\

That the zero locus does not form a closed loop is just a consequence of the maximal principle. And here is the argument:\\

If on the contrary, the zero locus $Z$ of $|f|$ has a closed loop as a subset. Then at least one connected component of $\IR^2 \backslash Z$ is bounded, denoted as $U$. Note that setting $f > 0$ defines a trivialization of $\mathcal{L}$ on $U$. And $f$ is an ordinary harmonic function on $U$ without zero points in it. In addition, $|f|$ can be extended as a continuous function to the closure of $U$ by setting $|f| = 0$ on the boundary. This implies that $f$ has a positive local maximum in $U$, which is impossible.
\end{proof}

\begin{definition}
Suppose $z_0$ is a zero point of a $\mathbb{Z}_2$ harmonic function $f$. Suppose that there are a non-zero number $a$ and a half integer $k$ such that $f = \text{Re}(a(z-z_0)^{k}) + O(|z-z_0|^{k+1})$ in a neighborhood of $z$, then $k$ is called the degree of $f$ at $z_0$. From the proof of the previous theorem, we know that the zero locus of $f$ near $k$ is an intersection of $2k$ curves that share a common boundary point at $z_0$ and intersect at $z_0$ equiangularly.
\end{definition}

\begin{theorem}
Suppose $z_0$ is a zero point of a $\mathbb{Z}_2$ harmonic function $f$. Suppose $u(z)$ is the entire function defined in section 1. If the degree of $f$ at $z_0$ is larger than $1$, then $z_0$ is a zero point of $u(z)$ with degree $\lfloor k - \dfrac{1}{2}\rfloor$ (the largest integer that is no greater than $k - \dfrac{1}{2}$).

\end{theorem}
\begin{proof}
If $z_0$ is a zero point of $f$ whose degree $k > 1$, then $z_0$ is also a zero point of $df + i*df$ whose degree is $k - 1$. Since $u(z) = \sqrt{D(z)} (df + i*df)$, $z_0$ is a zero point of $u(z)$ with degree $\lfloor k-\dfrac{1}{2} \rfloor$.
\end{proof}

Here are some easy examples.

\begin{example}
Suppose $D(z)$ is Pellian, and suppose $$p(z)^2 - q(z)^2 D(z) = 1.$$

Then the zero locus of $f$ is given by the set $$\{z |~ |p(z) + q(z)\sqrt{D(z)}| = 1\} .$$
This implies at a point in the zero locus,
$$\text{Im}(p(z)) = \text{Re}(q(z)\sqrt{D(z)}) = 0 ~~~ \text{and} ~~ (\text{Re}(p(z)))^2 + (\text{Im}(q(z)\sqrt{D(z)}))^2 = 1. $$
In particular, 
\begin{itemize}
    \item If $D(z)$ has only two roots and if they are distinct, then $$D(z) = (z - z_1)(z - z_2) = (z - k)^2 - m^2.$$ Then we can choose $p(z) = \dfrac{z-k}{m}$, $q(z) = \dfrac{1}{m}$ and the zero locus is the straight line segment connecting $z_1$ and $z_2$. In this situation, $U(z) = 1$.
    \item If $D(z) = p(z)^2 - 1$, and $q(z) = 1$, then the zero locus is 
    $$\{z | ~~ p(z) ~ \text{is real and} ~ |p(z)| \leq 1\} = p^{-1}([-1, 1]).$$
     An typical example of this kind is: $D(z) = z^{2n} - 1$.
\end{itemize}

\end{example}

\begin{example}

If $D(z)$ is a polynomial of degree $4$ with distinct roots and suppose $D(z) = (z - z_1)(z - z_2)(z - z_3)(z - z_4)$, then the degree of $U(z)$ is one. We may assume $U(z) = z - z_0$. (Note that $D(z)$ may not be Pellian in general.) Then there are two possibilities:
\begin{itemize}
    \item The zero locus is a combination of three curve segments and we may assume that they share a common end at $z_1$ and their other ends are $z_2$, $z_3$, $z_4$ respectively. They intersect equiangularly at $z_1$ and do not intersect at any other point. In this situation, $z_0 = z_1$. An typical example of this kind is: $D(z) = z^4 - z$.
    \item The zero locus is a combination of two curve segments and we may assume that one of them connects $z_1$ and $z_2$, the other one connects $z_3$ and $z_4$. If they intersect at a point in an orthogonal way, then this point must be $z_0$. Otherwise, they do not intersect.
\end{itemize}

The author is very curious how the four points $z_1, z_2, z_3$ and $z_4$ determine $z_0$ in general.
\end{example}

\bibliography{references.bib}
\bibliographystyle{ieeetr}

\end{document}